\newtheorem{thm}{Theorem}[section]
\newtheorem{theorem}[thm]{Theorem}
\newtheorem{corollary}[thm]{Corollary}
\newtheorem{lemma}[thm]{Lemma}
\newtheorem{proposition}[thm]{Proposition}
\newtheorem{assumption}[thm]{Assumption}
\newtheorem{remark}[thm]{Remark}
\newtheorem{definition}[thm]{Definition}
\theoremstyle{remark}
\def\beq{ \begin{equation}}
\def\eeq{  \end{equation} }
\def\bes{\begin{equation*}}
\def\ees{\end{equation*}}
\def\bmat{\begin{bmatrix}}
\def\emat{\end{bmatrix}}
\def\barr{\begin{array}}
\def\earr{\end{array}}
\def\bet{\begin{theorem}
}
\def\eet{\end{theorem}}
\def\Nd{ {[\frac{d}{2}]_+ }}
\newcommand{\RR}{{\mathbb R}}
\def\ncbo{{free basic open\ }}
\def\ncbbo{{free bounded basic open\ }}
\def\Ndsig{d}
\def\LC{\Lambda}
\def\dd{\delta}
\def\bd{\partial \mathcal D}
\def\hbd{\widehat{\bd}}
\def\cM{\mathcal M}
\def\cB{\mathcal B}
\def\cC{\mathcal C}
\def\cD{\mathcal D}
\def\cM{\mathcal M}
\def\PM{P_{\mathcal M}     }
\def\hL{\hat{L}}
\def\cL{\mathcal L}
\def\cS{\mathcal S}
\def\cI{\mathcal I}
\def\cP{\mathcal P}
\def\ccP{\mathcal P}
\def\cW{\mathcal W}
\def\cZ{\mathcal Z}
\def\cN{\mathcal N}
\def\smatng{\mathbb S_n(\mathbb R^g)}
\def\smatg{\mathbb S(\mathbb R^g)}
\def\posI{\mathfrak{I}}
\def\posn{\mathfrak{I}_p(n)}
\def\pos{\mathfrak{I}_p}
\def\cDpn{\cD_p(n)}
\def\cPN{\cP_\Nd}
\def\cPNd{\cPN^\dd }
\def\cPNpd{ \cP_{\Nd+1}^\dd }
\def\cPNsig{ \cP_d }
\def\cPddd{ \cP_d^{\dd \times \dd} }
\def\cPNsigd{ \cPNsig^\dd }
 \def\mfS{\mathfrak{S}}
\def\SS{S}
\def\cDpn{\cD_p(n)}
\def\dddd{\delta \times \delta}
\def\ddp{\dd^\prime}
\def\pnclos{ $\cPNsigd$-closure}
\def\pnclosed{$\cPNsigd$-closed}
\def\rds{respects direct sums}
\def\nus{\breve \nu}
\def\cMs{{\breve M}}
\def\PMs{P_{\cMs}}
\def\ocD{\overline{\cD}}
\def\ttq{{\tt{q}}}
\def\mfP{\mathfrak{P}}
\newcommand{\df}[1]{{\bf{#1}}{\index{#1}}}
\def\star{T} 
\def\st{T}  
\numberwithin{equation}{section}
\begin{document}

\setcounter{page}{1} \title[Convex Free Semi-Algebraic Sets]{Every Convex Free
Basic Semi-Algebraic Set has an LMI Representation}

\author[Helton]{J. William Helton${}^1$}
\address{Department of Mathematics\\
  University of California \\
  San Diego 92093}
\email{helton@math.ucsd.edu}
\thanks{${}^1$Research supported by NSF grants
DMS-0700758, DMS-0757212, and the Ford Motor Co.}
\author[McCullough]{Scott McCullough${}^2$}
\address{Department of Mathematics\\
  University of Florida 
   }
   \email{sam@math.ufl.edu}
\thanks{${}^2$Research supported by the NSF grant DMS-0758306.}

\subjclass[20]{47Axx (Primary).
47A63, 47L07, 47L30, 14P10 (Secondary)}

\keywords{Linear Matrix Inequalities, LMI, Matrix convex set, Convex sets
  of matrices,
free semi-algebraic geometry, free analysis, semi-definite programming, SDP}

\date{\today}

\maketitle

\begin{abstract}
   The (matricial) solution set of a
  Linear Matrix Inequality (LMI)  is
  a convex  free basic open semi-algebraic  set
 (discussed below).
  The main theorem of this paper is a converse,
  a result
  which has implications for semi-definite programming
  and systems engineering as well as free semi-algebraic geometry. 

  A free basic open semi-algebraic set is defined in terms of a
  symmetric free
 $\dd \times\dd$ matrix-valued  polynomial $p(x_1, \cdots, x_g).$
 Such a polynomial
 is a linear combination of words in freely non-commuting
   variables $(x_1,\dots,x_g)$ with coefficients
  from $M_\dd$, the $\dd \times\dd$ matrices over $\mathbb R$.
  The involution ${}^{\st}$
   on words given by sending a concatenation of letters
   to the same letters, but in the reverse order
   (for instance $(x_jx_\ell)^{\st}=x_\ell x_j),$ extends
   naturally to such polynomials and $p$
    is itself symmetric if $p^{\st}=p$.
   Let $\mathbb S_n(\mathbb R^g)$ denote
   the set of $g$-tuples $X=(X_1,\dots,X_g)$
  of symmetric $n\times n$ matrices.
   The  polynomial $p$ is naturally evaluated on a tuple
    $X\in\mathbb S_n(\mathbb R^g)$
    yielding
    a value $p(X)$
    which is a $\dd \times \dd$ block matrix with $n\times n$
    matrix entries.
    Evaluation
    at $X$ is compatible with the involution since
    $p^{\st}(X)=p(X)^{\star}.$ In particular, if $p$ is symmetric,     
    then  $p(X)$ is a symmetric matrix.

  Assuming that $p(0)$ is invertible, the invertibility
    set $\cD_{p}(n)$ of
  a free symmetric polynomial $p$ in dimension $n$
  is the component of $0$ of the set
\[
   \{ X \in\mathbb S_n(\mathbb R^g) \; : \
  \ p(X) \mbox{ is invertible} \}.
\]
  The invertibility
    set, $\cD_p$, is the
  sequence of sets $(\cD_p(n) ).$ It 
  is an example of a
  \ncbo semi-algebraic set.
 The sequence $\cD_p$ is convex if $\cD_p(n)$ is convex for each $n$.
   When $p=L$ is an affine linear symmetric 
  polynomial with constant term $I_\delta$,
  the expression $L(X)\succ 0$ is a linear matrix inequality and, as is clear,
   $\cD_L$ is a sequence of convex sets. 

 The main theorem of this article implies:
 {\it if $p(0)$ is invertible and  $\cD_p$
 is  bounded,
 then there is an $\ell$ and an affine linear $L$
  of size $\ell$ with constant term $I_\ell$  
  such that $\cD_{p}(n)=\cD_L(n)$ for each $n$ 
  if and only if $\cD_p$ is convex.}
\end{abstract}

\section{Introduction}
 The main result of this paper is the free algebra analog of
  the preposterous statement: 
\vspace{+.05in}

\centerline{\it A bounded open  convex set $\cC$ in $\mathbb R^n$ with
algebraic boundary is a polytope.}

\vspace{+.05in}

 A recurring theme in the
 non-commutative setting, such as that of
 a subspace of  C-star algebra
 \cite{Ar69,Ar72,Ar08} or in
 free probability \cite{Vo04,Vo05}
 to give two of many examples, is the need to consider
 the {\it complete matrix structure} afforded by
 tensoring  with $n\times n$ matrices (as $n$ ranges over all 
  positive integers). 
 The resulting theory of operator
 algebras, systems, spaces, and matrix
 convex sets 
 has matured to the point that there are now
 several excellent books on the subject including
 \cite{BL04} \cite{Pa02} \cite{Pi03}.

 Here we consider a bounded matrix convex set 
 $\cD_p$ which is free semi-algebraic in the sense that it is 
 determined by a matrix-valued free polynomial and show that
 there is a monic affine linear matrix-valued polynomial $L$ such 
  that $\cD_p=\cD_L$.   
 Since it involves matrix convex sets, it is not surprising
 that 
 the analysis hinges on the  matricial version
 of the Hahn-Banach Separation  Theorem of
 Effros and Winkler \cite{EW97} which
 says that given a point $x$
 outside a matrix convex set $\cC$ there is a 
 monic affine linear matrix-valued polynomial  which
 separates $x$ from $\cC$.  For a general matrix convex
 set $\mathcal C$, the conclusion is then that there is a collection,
 likely infinite, of monic affine linear matrix-valued
 polynomials which cut out $\mathcal C$.
  In the case $\mathcal C$ is matrix convex and also semi-algebraic,
  the challenge, successfully dealt with in this paper,
  is to prove that there is actually
  a single  monic affine linear 
   matrix-valued polynomial $L$  which defines $\mathcal C$; i.e.,
   $\cD_L=\cD_p$. 

    The article also contains some further results.
    For instance,  a corollary of the
    results of  Section \ref{sec:irreducible} is that if 
    $p$ satisfies certain irreducibility type hypotheses, 
    in addition to the assumption that $\cD_p$ is bounded and matrix convex,
    then $p$ has degree two. 
        In Section \ref{sec:freesag},  implications for
   free real algebraic geometry, a recently emerging 
  non-commutative analog of the classical    subject,
  are discussed. 
  Classically projections of semi-algebraic sets are semi-algebraic.
   A consequence of our main result is that this projection property is 
  false in the free case.  
  
  The main result also bears on
    a free analog of semi-definite
  programming,
  a major branch of convex optimization.
 Fundamental in  semi-definite programming is the class of  
  convex sets $C$ which  can be represented with an
  LMI, as is the much more general class consisting of projections 
  of LMI representable sets.
  Their free analogs behave 
  very differently:  the classes of projected  LMI representable
  sets which are free semi-algebraic and LMI representable sets are the same.
  This is shown in Section \ref{sec:persp}.

The remainder of this introduction contains a
    precise statement of the main result
    preceded by the relevant definitions.

\subsection{Free Polynomials}
  \label{subsec:ncpolys}
   Let $g$ be a positive integer which is now fixed
   for the remainder of the paper.  Let
   $\ccP$ \index{$\ccP$} denote the real free algebra of polynomials in the
   freely non-commuting indeterminates $x=(x_1,\dots,x_g).$
   Elements of $\ccP$ are
   {\bf free polynomials}  
   or often just {\bf polynomials}.
  \index{free polynomial}
   Thus, a free polynomial    $p$ is a finite linear combination,
 \begin{equation}
  \label{eq:poly}
    p=\sum p_w w,
 \end{equation}
   of words $w$ in $(x_1,\dots,x_g)$ with coefficients 
 $p_w\in\mathbb{R}$.

   There is a natural involution ${}^{\st}$ on
   $\ccP$  given by
 \begin{equation}
  \label{eq:genericp}
   p^{\st}=\sum p_w w^{\st},
 \end{equation}
  where, for a word $w$,
 \begin{equation}
  \label{eq:wordT}
   w=x_{j_1}x_{j_2}\cdots x_{j_n} \mapsto w^{\st} = x_{j_n}
   \cdots x_{j_2}x_{j_1}.
 \end{equation}
   A polynomial $p$ is symmetric if
  it is invariant with respect to the involution.
   In particular, $x_j^{\st}=x_j$ and for this reason
   the variables are sometimes referred to as symmetric
   free  variables. \index{symmetric variables}


\subsection{Evaluations}
 \label{sec:openG2}
  Let $\smatng$ denote the set of $g$-tuples \index{$\smatng$}
  $X=(X_1,\ldots,X_g)$ of real symmetric $n\times n$ matrices.
  Let $M_n$ denote the $n\times n$ matrices with real entries. 
  Each $X\in\smatng$ determines a representation 
  $e_X:\mathcal P \to M_n$  by evaluation.  Indeed, by 
  linearity, $e_X$ is determined by its action on words where
  $e_X(\emptyset)=I_n$ and for
   a non-empty word $w$ as in equation (\ref{eq:wordT}),
 \begin{equation}
  \label{eq:wX}
  e_X(w)=X_{j_1}X_{j_2}\cdots X_{j_n}.
 \end{equation}
  It is natural to write $p(X)$ instead of the more formal $e_X(p)$.
 
  Note that $p(X)$ respects the involution in the sense 
  that $p^{\st}(X)=p(X)^{\star}$.  In particular, if $p$ is symmetric,
  then so is $p(X)$.
  Finally, if $\pi:\mathcal P\to M_n$ is a representation which
  respects the involution, then there is an $X\in\smatng$ such
  that $\pi(p)=p(X)$.

\subsection{Matrix-Valued Polynomials}
  Let $\ccP^{\dd\times \ddp}$ denote \index{$\ccP^{\dd\times \ddp}$}
  the $\dd\times \ddp$ matrices with entries
  from $\ccP$.  Because row vectors of 
  polynomials figure prominently in this article, 
   $\ccP^{1\times \dd}$ is often abbreviated to 
   $\ccP^{\dd}$.

  Evaluation   at $X \in \smatng$ naturally
  extends entrywise to $p\in \ccP^{\dd\times\ddp}$
  with the result, $p(X),$ a $\dd\times\ddp$ block
  matrix with entries from $M_n$. 
  Up to unitary equivalence,
  evaluation at $X$ is conveniently described using tensor
  product notation by writing $p$ as  a finite linear combination
 \begin{equation}
  \label{pww}
   p=\sum_{w} p_w w,
\end{equation}
   where now the coefficients
   $p_w$ are $\dd\times \dd$ matrices (with real entries),
   and observing that 
 \[
   p(X)=\sum p_w \otimes w(X),
 \]
  where $w(X)=e_X(w)$ is given by equation \eqref{eq:wX}.

 The involution ${}^{\st}$ naturally extends to
  $\ccP^{\dddd}$ by
 \[
    p^{\st} = \sum_{w} p_w^{\star} w^{\st},
 \]
  for $p$ given by equation \eqref{pww}.
  A polynomial
  $p\in\ccP^{\dd\times \dd}$ is symmetric if
  $p^{\st}=p$ and in this case $p(X)=p(X)^{\star}$.

   A simple method of constructing new matrix-valued
  polynomials from old ones is from direct sums.
   \index{direct sum}
  For instance, if
  $p_j\in\ccP^{\dd_j\times \dd_j}$ for $j=1,2$, then
 \[
   p_1\oplus p_2=\begin{pmatrix} p_1 & 0\\0 & p_2 \end{pmatrix}
    \in \ccP^{(\dd_1+\dd_2)\times (\dd_1+\dd_2)}.
 \]

\subsection{Invertibility Sets}
 \label{subsec:semi-algebraic}
  A {\bf graded set} $\cS$ is a sequence
  $\cS=(\cS(n))_{n=1}^\infty$ where, for each
  $n,$ $\cS(n)\subset \smatng$. 
  The notation $\cS\subset \mathbb S(\mathbb R^g)$
  indicates that $\cS$ is a graded set. 
   The \df{principal component} of
  $\cS$, denoted $pc[\cS]$, 
 is the connected component of $0$ of $\cS$; i.e., the 
  graded set $pc[\cS]=(pc[\cS(n)])$.

  Suppose  $p\in \ccP^{\dddd}$ is symmetric.
  In particular, $p(0)$ is a  $\dddd$ symmetric
  matrix. Assuming that $p(0)$ is invertible,
  for each positive integer $n$  let
\[
  \posn = \{X\in\smatng : \ p(X) \mbox{ is  invertible}\} \subset \smatng,
\]
  and let $\pos$ denote the 
  graded set  $(\posn)_{n=1}^\infty$. 
  The {\bf invertibility set} $\cD_p$  of $p$
  is the graded set $\cD_p=pc[\pos]$.
   In Section \ref{sec:freesag} the graded set  $\cD_p$ is interpreted
  in terms of free semi-algebraic geometry.

\begin{remark}\rm
  By a simple affine linear
  change of variable the point $0\in\mathbb R^g$ can be
  replaced by $\lambda\in\mathbb R^g$. For $m>1$, 
  replacing $0\in\mathbb S_m(\mathbb R^g)$ 
  by a fixed $\Lambda\in\mathbb S_m(\mathbb R^g)$
  will require an extension of the theory.
\qed
\end{remark}

\begin{remark}\rm
  The graded set $\cD_p$ is closed with respect to unitary conjugation
  and direct sums - see Lemma \ref{lem:convexities} for the 
  precise statement. However, because the matrices involved
  are symmetric, a property not generally preserved under
  similarity,  $\cD_p$ is not a free set in the
  sense of Voiculescu \cite{Vo04} \cite{Vo05}. 
\qed \end{remark}

  The graded set $\cD_p$ is {\bf convex} \index{convex} if each
  $\cDpn$ is convex (in the usual sense).
  Similarly, $\cD_p$ is {\bf bounded} \index{bounded} if 
   there is a constant $K$ such for each $n$ and
  each $X\in \cD_p(n)$, 
  $\|X\|=\sum \|X_j\|\le K.$  

  The following
  list of conditions summarizes the
  usual  assumptions on $p$.

\begin{assumption}
 \label{assume}
  Fix $p$ a $\dd \times \dd$  symmetric matrix
  of polynomials of degree $d$ in $g$ free variables.
  Our standard assumptions are:
 \begin{enumerate}[(i)]
  \item  $p(0)$ is invertible;
  \item  $\mathcal D_p$ is bounded; and
  \item  $\mathcal D_p$ is convex.
 \end{enumerate}
\end{assumption}
 \index{Assumption \ref{assume}}

\subsection{Monic  Linear Pencils}
 \label{subsec:LMIs}
 A 
{\bf linear pencil} $L$ is an expression of the form
 \index{ linear pencil}
\begin{equation}
 \label{eq:linear-pencil}
    L(x):= A_0 + A_1 x_1 + \cdots + A_g x_g
\end{equation}
where, for some positive integer $\ell$,
 each $A_j$ is an $\ell \times \ell$ symmetric
matrix with real  entries. 
  (While linear pencil is standard usage,
  it is a bit of a misnomer.
  When the constant term  $A_0$ is non-zero, 
   a linear pencil is actually affine linear.)
  The integer $\ell$ is the {\bf size} of the pencil. 
\index{size of pencil}
  The pencil is monic if  $A_0=I$ in which case  $L$ is
 \index{monic  linear pencil}
 {\bf a monic linear pencil}.

Since a monic linear pencil (of size $\ell$) 
is an element of $\ccP^{\ell\times\ell}$ it evaluates
at a tuple $X \in \smatng$ as
\[
  L(X):= I_\ell \otimes I_n + A_1   \otimes X_1 + \cdots + A_g \otimes X_g.
\]

  For a square matrix $A$, 
  the notation $A\succ 0$ ($A\succeq 0)$ indicates that
  the symmetric matrix $A$ is positive definite (resp. positive
  semi-definite). 
  From the form of 
  the monic linear pencil $L$, it is straightforward to verify that
 its invertibility set is  the sequence
\[
  (\cD_L(n)) = (\{X \in \smatng : \ L(X) \succ 0 \})
\]
 and that each 
 $\cD_L(n)$ is convex.
 Moreover,
 \[
  (\overline{\cD_L(n)})=(\{X\in\smatng: \ L(X) \succeq 0\}.
 \]

   A {\bf Linear Matrix Inequality}, or {\bf LMI}
  for short, is an expression
  of the form $L(X)\succ 0.$
  LMIs figure prominently in  many branches of engineering and science.
 \index{Linear Matrix Inequality} \index{LMI}
  A graded subset $\cC=(\cC(n))$ of
  the graded set $\mathbb S(\mathbb R^g)$
  has a  {\bf (free) LMI representation}
  \index{LMI representation} \index{free LMI representation}
 if there is a monic  linear pencil $L$
  such that
$$
 \cC= \cD_L.
$$

  The following is the main theorem of this
  article.  A somewhat stronger version of
  the result appears later as Theorem \ref{cor:main-cor}.

\begin{theorem}
 \label{thm:main}
   If $p$ satisfies Assumption \ref{assume}, then 
   there is a monic  linear pencil $L$
   (of finite size)  
   such that $\cD_p(n)=\cD_L(n)$ for every $n$; that is,
   if $p\in\cP^{\delta\times \delta}$ is symmetric, $p(0)$ is invertible,
   and $\cD_p$ is bounded, then $\cD_p$ is convex
   if and only if the graded set $\cD_p$ has an LMI representation. 
\end{theorem}

  Results needed   for the proof of Theorem \ref{thm:main}
  occupy the paper up through 
  Section \ref{sec:hahnHello}.  The proof of
  the theorem itself appears in Section \ref{sec:proof}.
  That section also gives a 
  bound, depending only upon the degree $d,$ the number
  of variables $g,$ 
    and the (matrix) size $\delta$ of $p,$
   on the size of the linear pencil $L$
  needed to represent $\cD_p.$
  Section \ref{sec:irreducible}
  refines the main theorem by adding  irreducibility
  type hypotheses  on $p$ and concluding that 
  $p$ has degree two. 
  Implications for free real
  algebraic geometry and semi-definite programming appear
  in Section \ref{sec:freesag}.

\subsection{Acknowledgment}
  The authors thank David Zimmermann for his comments on the paper.

\section{Preliminaries}
\label{sec:bdryfacts}
  From  now through Section \ref{sec:proof}, fix
  a polynomial $p$ satisfying the conditions
  of Assumption \ref{assume}.  Thus amongst other things, 
  $p$ is $\dddd$ matrix-valued; has degree $d$;
  and is a polynomial in $g$ freely non-commuting variables.

  This section presents two basic facts for future use.
  The following lemma gives a useful criterion
  for containment in the closure
  $\ocD_p=(\overline{\cD_p(n)})$ of the graded set $\cD_p$. 

\begin{lemma}
 \label{inhboundary}
   Suppose $p\in\cP^{\dddd}$ satisfies the conditions
   of Assumption \ref{assume} and $n$ is a positive
   integer. If $X\in\smatng$,
   then $X\in\ocD_p(n)$ if and only if 
   $tX\in\cD_p(n)$ for all $0\le t<1$. 
\end{lemma}

\begin{proof}
  First suppose that $X\in\ocD_p(n)$. 
  Since $\cD_p(n)$
  is convex, so is $\ocD_p(n)$.
  Further, $\cD_p(n)$ contains $0\in \smatng.$
   Thus, $tX\in\ocD_p(n)$
  for $0\le t\le 1$.
  Moreover,
  there are only finitely many $0\le s\le 1$
  such that $p(sX)$ is not invertible because
  $p(0)$ is invertible and $p$ is a polynomial.
   Indeed, $p(sX)$ is invertible if and only if
  the non-zero polynomial $q(t)=\det(p(tX))$  
  is not zero at $s$.  
  If $0\le t <1$ and $p(tX)$
  is invertible, then $tX \in\posn$.
  To see that  $tX$ is in fact in $\cD_p(n)$,
  we argue by contradiction. Accordingly, suppose
  $tX\notin \cD_p(n).$ In this case, since
  $\posn$ is both open and  the disjoint union of its
  connected components, $tX$ is contained in some
  open set which does not meet $\cD_p(n)$.
  Thus, 
  $tX\notin \ocD_p(n)$,
  a contradiction.
 Now  $tX\in\cD_p(n)$ and
 since $\cD_p(n)$ is convex, $sX\in\cD_p(n)$ for $0\le s\le t$.
  Choosing a sequence $0<t_n<1$ converging to $1$  such that
  $p(t_nX)$ is invertible it
  now follows that $sX\in\cD_p(n)$ for $0\le s<1$.

  The converse is evident. 
\end{proof}

\begin{lemma}
 \label{lem:nochangestart}
   Let $\cC=(\cC(n))$ be a graded set with 
   $\cC(n)\subset \smatng$ for each $n$. 
   If each $\cC(n)$ is open and if 
   $L$ is a monic  linear pencil, then $L$ is
   positive definite on each $\cC(n)$ if and only if $L$
   is positive semi-definite on each $\cC(n)$.
\end{lemma}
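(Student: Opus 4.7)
The forward direction is immediate since positive definiteness implies positive semi-definiteness, so all the content is in the converse: assuming $L \succeq 0$ on $\cC$, I want to deduce $L \succ 0$ on $\cC$. My plan is to argue by contradiction, exploiting the monic linearity of $L$ together with openness of $\cC(n)$ to produce a point in $\cC$ where $L$ has a strictly negative eigenvalue.

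Write $L(x) = I + \sum_{j=1}^g A_j x_j$ so that for any $X \in \smatng$ one has
\[
L(X) = I \otimes I_n + \sum_{j=1}^g A_j \otimes X_j.
\]
Suppose for contradiction that there exists $X \in \cC(n)$ and a nonzero vector $v$ with $L(X)v = 0$. Then $\sum_j (A_j \otimes X_j)v = -v$, and by the linearity of the non-constant part of $L$, for any real $t$,
\[
L(tX)v \;=\; v + t\sum_j (A_j \otimes X_j)v \;=\; (1-t)v.
\]
Thus $\langle L(tX)v,v\rangle = (1-t)\|v\|^2$, which is strictly negative as soon as $t > 1$.

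The final step is to observe that since $\cC(n)$ is open in $\smatng$ and $X \in \cC(n)$, the tuple $tX = (1+\varepsilon)X$ belongs to $\cC(n)$ for all sufficiently small $\varepsilon > 0$. Picking any such $t>1$ with $tX \in \cC(n)$ yields a point of $\cC$ at which $L$ has a strictly negative eigenvalue, contradicting the hypothesis $L \succeq 0$ on $\cC$. Hence no such $(X,v)$ exists and $L \succ 0$ throughout $\cC$. There is no real obstacle here; the only thing to check carefully is the passage from $L(X)v = 0$ to $L(tX)v = (1-t)v$, which uses nothing beyond the monic affine linear form of $L$.
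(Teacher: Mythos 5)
Your proof is correct and follows essentially the same route as the paper: the paper also takes a kernel vector $v$ of $L(X)$, observes that $q(t)=\langle L(tX)v,v\rangle$ is affine with $q(0)=1$ and $q(1)=0$ (hence negative for $t>1$), and uses openness of $\cC(n)$ to place some $tX$ with $t>1$ inside $\cC(n)$. Your explicit computation $L(tX)v=(1-t)v$ is just a slightly more concrete version of the same step.
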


\begin{proof}
   Suppose $L$ is positive semi-definite on $\cC(n)$. If
   $L$ is not positive definite on $\cC(n)$, then there is an 
   an $X\in\cC(n)$ such that $L(X)\succeq 0$ and  $L(X)$
   has a kernel.  In particular, there is a unit vector
   $v$ such that $L(X)v=0$. Let $q(t)=\langle L(tX)v,v\rangle$.
   Thus $q$ is affine linear  and $q(0)=1,$ whereas
   $q(1)=0$. Hence $q(t)<0$ for $t>1$ and thus
   $L(tX)\not\succeq 0$ for $t>1$.  On the other hand,
   since $\cC(n)$ is open and $X\in\cC(n)$, there is $t>1$ such that
   $tX\in\cC(n)$ which  contradicts $L(tX)\succeq 0$.
\end{proof}

\section{Dominating Points and the Boundaries of $\cD_p$}
 \label{sec:dominate}
  There are two notions, both important for what follows, 
  of the boundary of the graded set $\cD_p$. 
  The \df{(topological) boundary} of $\cD_p$,
  \index{boundary, topological} \index{$\bd_p$}
   denoted  $\bd_p$, is the
  graded set $(\bd_p(n))$ where 
  $\bd_p(n)$ is the usual topological boundary of $\cD_p(n)$.
  Let $\hbd_p(n)$ \index{$\hbd_p$} denote
  the set of pairs $(X,v)$ where $X\in\bd_p(n),$
  the vector $v$ is in  $\mathbb R^\dd \otimes \mathbb R^n$,
  and $p(X)v=0$.  The assumption  $v\ne 0$ will often be implicit.
  The graded set $\hbd_p = (\hbd_p(n))$
  is the \df{detailed boundary} of $\cD_p$.  \index{boundary, detailed}    
 The use of the term  graded set \index{graded set} for $\hbd_p,$ while
  technically different than the use of the term graded set 
 defined earlier, should cause no confusion.

   Given
  $(X^j,v^j)\in \mathbb S_{n_j}(\mathbb R^g)\times(\mathbb R^\dd \otimes
   \mathbb R^{n_j}),$ for $j=1,2,$ let
\[
 \oplus_{j=1}^2(X^j,v^j)   =( \begin{pmatrix} X^1 & 0 \\ 0 & X^2 \end{pmatrix},
    \begin{pmatrix} v^1 \\ v^2 \end{pmatrix} ).
\]
   This notion of direct sum \index{direct sum} clearly
  extends to a finite list $(X^j,v^j)$, $j=1,2,\dots, s$. 
  \index{$\oplus (X^j,v^j)$}
   A graded 
   set $S=(S(n))$ where
   $S(n)\subset \smatng \times (\mathbb R^\delta \otimes \mathbb R^n)$,
   \df{respects direct sums} 
   if $(X^j,v^j)\in S(n_j),$ for $j=1,2,\dots,s,$
  implies $\oplus_1^s (X^j,v^j)\in S(n),$ where $n=\sum n_j$.  
   It is evident that the graded set 
   $\hbd_p=(\hbd_p(n))$ respects direct sums.

   Let $\cPNsigd$ \index{$\cPNsigd$}
   denote the $1\times \delta$ (row)
   matrices with entries polynomials of degree at most $d$.
   If $X\in\smatng$ and $q\in\cPNsigd$, then
   $q(X)$ is a linear mapping from $\mathbb R^\delta\otimes \mathbb R^n$
   to $\mathbb R^n$.  Hence, if $(X,v)\in\hbd_p(n)$, then
   $q(X)v$ is defined. 
  Let $T=(T(n))$ denote 
  a non-empty graded subset of 
  the graded set $\hbd_p.$ 
  A point $(X,v)\in\hbd_p(m)$
 is a  \df{dominating point  of $T$}
  if $q(X)v=0$ implies that $q(Y)w=0$
  for every $n$ and $(Y,w)\in\hbd_p(n)$; i.e.,
  if $q$ vanishes \index{vanishes}
   at $(X,v)$, then $q$ vanishes on all of $T$. 
  Let $T_*$ \index{$T_*$} denote the dominating points of $T$. 
  Note $T_*$ need not be contained in $T$.

  \index{$\cI(S)$}
 Given a graded subset  $T=(T(n))_{n=1}^\infty$ of
  the graded set $\hbd_p$  let
\[
  \cI(T) = \{q \in \cPNsigd:  
   \  q(X)v=0 \
    \mbox{ for all } (X,v)\in T \} \subset \cPNsigd.
\]
 In the special case that $T=\{(X,v)\}$ is a singleton
  (so there is an $m$ such that $T(m)$
  has one element, and $T(n)$ is the empty set
  for all other $n$), 
  the notation  $\cI(X,v)$ is used  in place of
  the more cumbersome  $\cI(\{(X,v)\})$.
   Note that, in the case $\dd=1$, if not for the degree $d$ restriction,
  the subspace
 $\cI(T)$ would be a left ideal in $\ccP$.
  In any case, $\cI(T)$ is a subspace of $\cP^\delta_d$. 

  The following lemma follows readily from the definitions. 

\begin{lemma}
 \label{lem:yesdomin-warmup}
    Let $T=(T(n))$ be a non-empty graded subset of the graded set
   $\hbd_p$.  The point $(X,v)\in\hbd_p$
   is a dominating point of  $T$ if and only if
 \[
   \cI(X,v)\subset \cI(T).
 \]
   On the other hand, if $(X,v)\in T(n)$, then
 \[
  \cI(T)\subset \cI(X,v).
 \]
  Thus, if $(X,v)\in T(n)\cap T_*(n),$ then
 \[
  \cI(X,v)=\cI(T).
 \]
\end{lemma}

  Given graded subsets $A=(A(n))$ and $B=(B(n))$ of
  $\hbd_p$, the 
 {\bf intersection} of $A$ and $B$, denoted $A\cap B,$
  is  the graded set 
  $(A(n)\cap B(n))$. Similarly, $A$ is {\bf non-empty}
  if there is an $m$ so that $A(m)$ is non-empty.
  The following two lemmas are
  key facts about dominating points for graded sets which 
  respect direct sums.

\begin{lemma}
 \label{lem:yesdomin}
   Suppose $S=(S(n))$ is a non-empty graded subset of the graded set
  $\hbd_p.$ 
   If $S$  respects  direct sums, then
   there is an $m$ and a $(X,v)\in S(m)$ such that
\begin{equation}
 \label{eq:SIS}
  \cI(X,v) =\cI(S).
\end{equation}
  Hence $S\cap S_*$ is non-empty.
\end{lemma}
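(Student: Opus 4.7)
The plan is to exploit finite-dimensionality of $\cPNsigd$ together with the direct sum closure of $S$. Observe that by definition
\[
 \cI(S) = \bigcap_{(X,v)\in S} \cI(X,v),
\]
an intersection of subspaces of the finite-dimensional vector space $\cPNsigd$. Consequently the intersection stabilizes after finitely many terms: there exist $(X^1,v^1),\dots,(X^s,v^s) \in S$ with
\[
 \cI(S) = \bigcap_{j=1}^s \cI(X^j,v^j).
\]
I would prove this by a simple greedy/dimension argument, picking each new $(X^{j+1},v^{j+1})$ so as to strictly decrease the dimension of the running intersection, which must terminate.

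Next I would form the direct sum $(X,v) := \oplus_{j=1}^s (X^j,v^j)$. Since $S$ respects direct sums, $(X,v) \in S$. The key computation is that for any $q \in \cPNsigd$,
\[
 q(X)v \;=\; q\bigl(\oplus_{j=1}^s X^j\bigr)\bigl(\oplus_{j=1}^s v^j\bigr) \;=\; \oplus_{j=1}^s q(X^j)v^j,
\]
because evaluation of a polynomial on a block-diagonal tuple is block diagonal (the constant term produces a block identity, and each word in the variables is block diagonal). Hence $q(X)v = 0$ if and only if $q(X^j)v^j = 0$ for every $j$, i.e.\ $\cI(X,v) = \bigcap_{j=1}^s \cI(X^j,v^j) = \cI(S)$, which is exactly~\eqref{eq:SIS}.

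This also shows $(X,v) \in S \cap S_*$: the inclusion $\cI(X,v) \subset \cI(S)$ is the definition of dominating, and we have equality. I do not expect any real obstacle; the only subtlety is the dimension-stabilization step, but since $\cPNsigd$ has dimension at most $\dd \sum_{j=0}^{\Ndsig} g^j$, a strictly decreasing chain of subspaces of $\cPNsigd$ has length bounded by this dimension, so finitely many elements of $S$ suffice.
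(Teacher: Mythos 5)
Your proposal is correct and follows essentially the same route as the paper: write $\cI(S)$ as an intersection of subspaces of the finite-dimensional space $\cPNsigd$, reduce to a finite subintersection, and take the direct sum of the corresponding points, using that evaluation respects direct sums. The only difference is that you spell out the block-diagonal computation $q(\oplus X^j)(\oplus v^j)=\oplus q(X^j)v^j$, which the paper leaves implicit.
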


\begin{proof}
  First note that
 \[
  \cI(\SS)= \bigcap \{ \; \cI(Y,w):\  (Y,w)\in \SS\}.
 \]
  Thus, since each $\cI(Y,w)$ is a subspace
  of the finite dimensional vector space
  $\cPNsigd$, there exists an $s$ and
  $(Y_j,w_j)\in \SS(n_j)$ for $j=1,\dots,s$ such that
 \[
   \cI(\SS)=\cap_{j=1}^s \cI(Y_j,w_j).
 \]
  Let $(X,v)=\oplus (Y_j,w_j)$.
  Then $(X,v)\in \SS(m)$, where $m=\sum n_j,$ and
\begin{equation}
 \label{eq:intersect-sum}
  \cI(X,v)=\cap_{j=1}^s \cI(Y_j,w_j) =\cI(\SS).
\end{equation}
\end{proof}

\begin{lemma}
 \label{all-or-none}
  Suppose $S=(S(n))$ is a graded
  subset of the graded set $\hbd_p$ which respects direct sums
  and suppose $q\in \cPNsigd$.
  If $(X,v)\in S(n)\cap S_*(n)$ and $(Y,w)\in S(m)\cap S_*(m),$
  then $q(X)v=0$ if and only if $q(Y)w=0$;  that is,
  $q$ either vanishes on the whole 
  graded set $S\cap S_*= (S(n)\cap S_*(n))$ or none of $S\cap S_*$.
\end{lemma}

\begin{proof}
  From Lemma \ref{lem:yesdomin-warmup} (twice),
\[
  I(X,v)=I(S)=I(Y,w).
\]
\end{proof}

  This section closes with the following observation.
  A graded subset $Z=(Z(n))$ of the graded set $\hbd_p$
  {\bf respects simultaneous unitary conjugation}, if 
  for each $n$, $(X,v) \in Z(n)$ and $n\times n$  unitary $U$,
\begin{equation}
  \label{unitary-conj}
        U^{\star}(X,v)U := ((U^{\star}X_1U,\dots,U^{\star}X_gU),U^{\star}v) \in Z(n).
\end{equation}

\begin{lemma}
 \label{lem:variety-of-ideal}
   If $I$ is a subset of  $\cP_d^\delta$, then the
   graded set  $\cZ(I)=(\cZ(I)(n))$ defined by
 \[
   \cZ(I)(n) =\{(X,v) \in\hbd_p(n): f(X)v=0 \mbox{ for all } f\in I\}
 \]
  respects both direct sums and unitary conjugations. 

  Further, if $I\subset J\subset \cP_d^\delta,$ then
  $\cZ(I)(n)\supset \cZ(J)(n)$ for every $n$;  that is, 
  $\cZ(I)\supset \cZ(J)$.   
\end{lemma}

\begin{proof}
  The first statement is evident if $I$ contains a single $q\in\cP_d^\delta$.
  The general result follows by observing 
  that the properties of respecting direct sums and unitary conjugations
  are preserved under (termwise) intersection of graded sets.

  The statement about inclusions is readily verified. 
\end{proof}

\section{Closure with Respect to a Subspace of Polynomials}
 \label{sec:closure}
 In this section a canonical closure operation
 on graded subsets $W=(W(n))$ of the
  graded set $\hbd_p$ is introduced and its 
  properties developed.   
  Recall that the positive integers 
  $d$, $\delta$, and $g$ have all been fixed
 (by $p$) and that $\cP_d^\delta$ denotes the
  $1\times \delta$ matrices whose entries are
  free polynomials of degree at most $d$ in $g$
  freely non-commuting symmetric variables.

 The
 {\bf \pnclos }  of a non-empty graded subset 
  $W=(W(n))$ of $\hbd_p$ 
 is \index{closure} the graded
 set $W_z = (W_z(n))$ where, 
$$
  W_z(n):=\{ (X,v) \in \hbd_p(n) : f(X)v=0 \mbox{ for every }  f \in \cI(W) \}.
$$
 In particular, to say
 $W$ is \ {\bf $\cPNsigd$-closed} means $W_z=W$.
 
\begin{lemma}
 \label{in-closure}
   Let $W=(W(n))$ denote a non-empty graded subset of $\hbd_p$. 
 \begin{enumerate}[(i)]
  \item  In the notation of Lemma \ref{lem:variety-of-ideal},
    $W_z=\cZ(\cI(W))$; 
  \item   if $(X,v)\in\hbd_p(n),$ then $(X,v)\in W_z(n)$ if and only
   if $\cI(X,v)\supset \cI(W);$
  \item   $\cI(W)=\cI(W_z);$ and
  \item
    \label{it:wz-largest}
    if $U=(U(n))$ 
  is a graded subset of $\hbd_p$
  and $\cI(U)=\cI(W)$, then $U\subset W_z$;  that is, $U(n)\subset W_z(n)$
    for every $n$. 
 \end{enumerate}
\end{lemma}

  Note that item \eqref{it:wz-largest} says that $W_z$ is the
  largest graded subset of $\hbd_p$ such that $I(W_z)=I(W)$.

\begin{proof}
  The first item is evident. To prove item $(ii)$, 
  suppose $(X,v)\in W_z(n)$.
  If $q\in \cI(W)$, then $q(X)v=0$ and hence $q\in \cI(X,v)$.
  Thus, $\cI(W)\subset \cI(X,v)$.  Conversely, suppose
  $(X,v)\in\hbd_p(n)$ and 
  $\cI(X,v)\supset \cI(W)$. If $q\in\cI(W)$, then $q\in\cI(X,v)$
  and hence $q(X)v=0$. Hence $(X,v)\in W_z(n)$. 

  Since $(X,v)\in W_z$ implies $\cI(X,v)\supset \cI(W)$, it follows
  that $\cI(W_z)\supset \cI(W)$.  On the other hand,
  since $W\subset W_z$, the inclusion $\cI(W)\supset \cI(W_z)$
  and the equality $\cI(W)=\cI(W_z)$ follows.

  Finally, suppose $\cI(U)=\cI(W)$ and let $(X,v)\in U$ be
  given. If $q\in \cI(W)$, then $q\in\cI(U)$ and hence
  $q(X)v=0$. Thus, $(X,v)\in W_z$ and hence $U\subset W_z$.
\end{proof}

  The following lemma collects basic facts
  about the $\cPNsigd$-closure operation.
  The statement and proof extensively use 
  the following conventions.  Given 
  graded subsets $A=(A(n))$ and $B=(B(n))$
  of the graded set $\hbd_p$, the notation
  $A\subset B$ means $A(n)\subset B(n)$ 
  for each $n$.  Similarly, the notation
  $A\subsetneq B$ means $A\subset B$ and
  there is an $m$ so that $A(m)\subsetneq B(m)$. 

\begin{lemma}
\label{lem:clos}
 Suppose $\hbd_p \supset A, B$ are non-empty graded sets 
 which respect direct sums. 
\begin{enumerate}[(i)]
\item
  $A\subset A_z$;
\item
 If $A \supset B$, then $\cI(A) \subset \cI(B);$
\item
If $\cI(A) \subset \cI(B)$, then
   $A_z   \supset B_z  \supset B;$
\item
  If $B\subset A$, then $B_z\subset A_z$;
\item
 If $B$ is $\cPNsigd$-closed and $B\subsetneq A$, then
 $\cI(A)\subsetneq \cI(B)$;
\item
If $A_1 \supset A_2 \supset \cdots $
is a decreasing
sequence of non-empty \pnclosed \ sets,
then there is an $m$ such that $A_m=A_\ell$ for all $\ell\ge m$; and 
\item
\label{it:minelement}
 A non-empty collection $\mathfrak{T}$
 of non-empty $\cPNsigd$-closed subsets
 of $\hbd_p$ contains a minimal element; i.e., there
 exists a set $T\in\mathfrak{T}$ such that if $A\subset T$
 and $A\in\mathfrak{T}$, then $A=T$.
\end{enumerate}
\end{lemma}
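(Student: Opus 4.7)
The plan is to dispatch items (1)--(4) essentially by unwinding the definitions and invoking Lemma \ref{in-closure}, to handle item (5) by a pointwise separation argument that uses $\cPNsigd$-closedness of $B$, and then to extract (6) and (7) as quick consequences of (5) together with the fact that $\cPNsigd$ is finite-dimensional.

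For items (1)--(4): Item (1) is immediate from the definition of $W_z$, since any $(X,v)\in A$ trivially annihilates every $f\in\cI(A)$. Item (2) is a direct consequence of the definition of $\cI(\cdot)$. For item (3), if $(X,v)\in B_z$ then by Lemma \ref{in-closure} we have $\cI(X,v)\supset\cI(B)\supset\cI(A)$, so again by Lemma \ref{in-closure}, $(X,v)\in A_z$; the inclusion $B_z\supset B$ is just item (1). Item (4) then follows by combining (2) and (3). None of these require work beyond chasing the definition of $\cI(\cdot)$ and the first assertion of Lemma \ref{in-closure}.

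The main step is item (5). Given $B$ which is $\cPNsigd$-closed together with a strict inclusion $B\subsetneq A$, I pick a point $(X,v)\in A\setminus B$. Because $B=B_z$, the point $(X,v)$ is not in $B_z$, which by Lemma \ref{in-closure} means $\cI(X,v)\not\supset\cI(B)$. Thus there is some $q\in\cI(B)$ with $q(X)v\neq 0$. Since $(X,v)\in A$, this same $q$ cannot lie in $\cI(A)$, witnessing $\cI(A)\subsetneq\cI(B)$ (the non-strict inclusion is item (2)). This separating-polynomial step is the only place where closedness of $B$ is actually used and is where I expect all the real content to sit; everything else in the lemma is formal.

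Items (6) and (7) are then nearly free. For (6), a chain $A_1\supsetneq A_2\supsetneq\cdots$ of nonempty $\cPNsigd$-closed sets produces, via (5), a strictly increasing chain $\cI(A_1)\subsetneq\cI(A_2)\subsetneq\cdots$ of subspaces of the finite-dimensional vector space $\cPNsigd$, which must terminate after finitely many steps. For (7), I start with any $T_1\in\mathfrak{T}$; if $T_1$ is not already minimal, descend to some $T_2\subsetneq T_1$ in $\mathfrak{T}$, and iterate. By (6) this descent must halt, and the terminal set is a minimal element of $\mathfrak{T}$. The whole argument rests on the finite dimensionality of $\cPNsigd$, which holds by construction since it is the space of $\dd$-tuples of polynomials of bounded degree.
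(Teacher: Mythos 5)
Your proof is correct and follows essentially the same route as the paper: items (1)--(4) are formal, item (5) carries the content, and (6)--(7) fall out of the finite dimensionality of $\cPNsigd$. The only (immaterial) difference is in (5), where you exhibit an explicit separating $q\in\cI(B)\setminus\cI(X,v)$ for a point $(X,v)\in A\setminus B$, while the paper argues by contradiction from $\cI(A)=\cI(B)$ using items (2) and (3); both arguments are valid and of the same depth.
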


\begin{proof}
  The first four items are obvious.

 To prove $(v)$, note that by $(ii)$, $\cI(A)\subset \cI(B)$. On
 the other hand,
 if $\cI(A)=\cI(B)$, then by $(iii)$, $A_z = B_z$. But then,
  because $B$ is $\cP_d^\delta$ closed, 
\[
  B_z=B \subsetneq A \subset A_z =  B_z,
\]
 a contradiction.

 Item $(vi)$  holds because, by $(v)$,  $\cI(A_1)
 \subset \cI( A_2) \subset \cdots $
 is an increasing nest of subspaces of
 the finite dimensional vector space $\cPNsigd$.
 Thus there is an $m$ such that
 $\cI(A_\ell)=\cI(A_m)$ for all $\ell \ge m$.
 Using $(iii)$ twice and the fact that each $A_\ell$
 is $\cPNsigd$-closed, it follows that $A_\ell=A_m$
 for $\ell\ge m$.

 To prove (\ref{it:minelement}),
 choose $A_1\in\mathfrak{T}$. If $A_1$ is not minimal,
 then there exists $A_2\in\mathfrak{T}$ such that
  $A_1 \supsetneq  A_2.$
 Continuing in this fashion,  eventually produces a minimal set $T$
 as the alternative is a
 nested strictly decreasing sequence
\[
  A_1\supsetneq A_2 \supsetneq A_3 \supsetneq \dots
\]
 from $\mathfrak{T}$ which contradicts $(vi)$.
\end{proof}

  Facts about the relation between dominating
  points   and $\cPNsig$-closures
  are collected in the next lemma.
  Recall the characterization of
  dominating points given 
  in Lemma \ref{lem:yesdomin-warmup}.
  If $A$ is
  a graded subset of $\hbd_p$, let 
  $A_*$ denote the graded set $A_*=(A_*(n))$. 
  Recall, if $B$ is also a graded set,
  then $A\cap B$ is the graded set
  $(A(n)\cap B(n))$.

\begin{lemma}
 \label{lem:domin}
    Suppose $\hbd_p \supset A, B$ are non-empty
  graded sets which respect direct sums. 
\begin{enumerate}[(i)]
 \item 
  If $ A \supset B,$ then  $A_* \subset B_*$;
 \item 
   \label{it:Astvsz}
     $A_* = (A_z)_*$;
 \item 
   \label{it:BBst}
    $B\cap B_*$ is non-empty;
 \item 
  \label{it:domin-4}
  \begin{equation}
       B\cap B_* \subset \{(X,v)\in\hbd_p : \cI(X,v)=\cI(B)\} \mbox{ and;}
  \end{equation}
 \item 
   If $A$ is $\cPNsigd$ closed, then
  \[
   A\cap A_* =\{(X,v)\in \hbd_p : \cI(X,v)=\cI(A) \}.
  \]
    Hence for any $B$,
  \[
   B_z \cap B_* =\{(X,v)\in\hbd_p: \cI(X,v)=\cI(B)\}.
  \]
\end{enumerate}
\end{lemma}

\begin{remark}
 \label{BcapB-star}
   Note that item $(iii)$ 
   is Lemma \ref{lem:yesdomin}
   and $(iv)$ is part of 
    Lemma \ref{lem:yesdomin-warmup}.
\qed \end{remark}

\begin{proof}
   To prove item $(i)$ observe, 
  if $(X,v) \in A_*(n)$, then,
  by Lemma \ref{lem:yesdomin-warmup}
  and Lemma \ref{lem:clos}$(ii)$,
   $\cI(X,v) \subset \cI(A)  \subset \cI(B)$.
  Thus, by another application of Lemma \ref{lem:yesdomin-warmup},
  $(X,v) \in B_*(n)$.

   By Lemma \ref{lem:clos}$(i)$,  $A\subset A_z$.
   Thus, by part $(i)$ of this lemma, $A_*\supset (A_z)_*$.
   On the other hand, if $(X,v)\in A_*(n)$, then,
   in view of Lemma \ref{in-closure}$(iii)$, 
  \[
   \cI(X,v)\subset \cI(A) = \cI(A_z)
  \]
   and thus $(X,v)\in (A_z)_*(n)$. Hence $A_*\subset (A_z)_*$
   and item $(ii)$ is proved.

   It remains to prove item $(v)$.
   One inclusion follows from $(iv)$. 
     To prove the other inclusion, suppose $A$ is $\cPNsigd$-closed,
   $(X,v)\in\hbd_p$, and $\cI(X,v)=\cI(A)$.
   Since
   $\cI(X,v) \supset \cI(A)$ and $A$ is $\cPNsigd$-closed,
   item $(ii)$ of  Lemma \ref{in-closure} implies
   $(X,v)\in A$. On the other hand, $(X,v)\in A_*$
   since $\cI(X,v) \subset \cI(A)$. Thus the reverse
   inclusion holds and the proof of the first part of
   $(v)$ is  complete. The second part of $(v)$ 
   follows from the first part and item $(iii)$ of Lemma \ref{in-closure}.
\end{proof}

For a monic  linear pencil $L,$
let $i(L)$ denote the graded subset $(i(L)(n))$
  of the graded set $\hbd_p$ defined by 
\[
  i(L)(n):= \ \{(Y,w)\in\hbd_p(n) : \ L(Y) \mbox{ is  invertible}\}.
\]
  \index{$i(L)$}
  If $S$ is a graded subset of $\hbd_p$, then $L$ is
  said to be {\bf singular on $S$} if $L(X)$ is not
  invertible for each $n$ and $(X,v)\in S(n)$; i.e., if
  $S(n)\cap i(L)(n)$ is empty for each $n$.

\begin{proposition}
 \label{propL4}
  Let $S=(S(n))$ be a non-empty
  graded subset of the graded set $\hbd_p.$ Suppose
  $S$  respects direct
  sums and  $L$ is a monic  linear pencil. If
 \begin{enumerate}[(i)]
  \item  $L$ is singular on $S_*$; and
  \item  $\emptyset \ne i(L) \subset S$,
 \end{enumerate}
 then $i(L)_z$ is properly contained in $S_z$; i.e., there is
  an $m$ such that 
 \[
   i(L)_z(m) \subsetneq S_z(m). 
 \]
\end{proposition}

\begin{proof}
    Item $(ii)$ and Lemma \ref{lem:clos}$(iv)$
     imply  $i(L)_z\subset S_z$.
   Arguing by contradiction,
  suppose that   $i(L)_z=S_z$.  Then, from Lemma
  \ref{lem:domin}$(ii)$ (twice),
 \[
     i(L) \cap i(L)_* = i(L) \cap (i(L)_z)_*
       = i(L)\cap (S_z)_* =i(L)\cap S_*.
 \]
  On the other hand, since $i(L)=(i(L)(n))$ 
  is a non-empty graded subset of 
   $\hbd_p$ which \rds, by Lemma \ref{lem:yesdomin},
  there is an $m$ and an $(X,v)\in i(L)(m)\cap i(L)_*(m).$
  Hence there is an $(X,v)\in i(L)(m)\cap S_*(m)$.
  But then $L(X)$ is invertible, 
   since
  $(X,v) \in i(L)$ and on the other hand, by $(i)$,
  $L(X)$ is singular because $(X,v) \in S_*(m)$.
  This contradiction proves the indicated inclusion is proper.
\end{proof}

\section{Convexity and the Invertibility Set}
 \label{sec:sepPencils}
  This section contains proofs of two facts about
  the convex graded set $\cD_p.$ 
   First, it is in fact an open matrix convex set
  (see Definition \ref{def:matrix-convex} below); 
  and second, 
   membership in $\cD_p$ and its boundary is
  determined by compressions to subspaces of dimension
  at most $\nu=\delta\sum_{0}^d g^j.$ (Recall,
  $p$ is $\dd\times \dd$ matrix-valued,
  $d$ is the degree of $p$, and $g$ is the number of variables.)

\subsection{Matrix Convexity}
  A graded subset $S=(S(n))$ of $\smatg$ 
  \df{respects simultaneous unitary conjugation}
  if
  for each $n$, $X\in S(n)$ and each $n\times n$ unitary matrix,
\[
  U^{\star}XU = (U^{\star}X_1U,\dots,U^{\star}X_gU)\in S(n).
\]
  This is analogous to \eqref{unitary-conj}.
  The following lemma applies to any 
  $\cD_q$, whether convex or not.  The second item 
  has already been used repeatedly. 

\begin{lemma}
 \label{lem:convexities}
  Suppose $q\in\cP^{r\times r}$ is symmetric and $q(0)$ is invertible.
 \begin{enumerate}[(i)]
   \item   The graded set 
     $\cD_q$ respects simultaneous unitary conjugation; and 
   \item 
       $\cD_q$ respects direct sums.
 \end{enumerate}
\end{lemma}

\begin{proof}
  The first item follows from the fact that  $q(U^{\star}XU)=U^{\star}q(X)U$
  and the second from $q(X\oplus Y) = q(X)\oplus q(Y)$. 
\end{proof}

   Recall, by definition,
   $\cD_p=(\cD_p(n))$ is convex if each $\cD_p(n)$ is convex.

\begin{lemma}
  \label{lem:restrict}
    If $\cD_p$ is convex, $X\in\smatng$, $Y\in\mathbb S_m(\mathbb R^g)$,
     and $X\oplus Y\in\cD_p(n+m)$, then $X\in\cD_p(n)$ and $Y\in\cD_p(m)$.
\end{lemma}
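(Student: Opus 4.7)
The plan is to exploit the fact that evaluation of a polynomial on a block diagonal tuple is block diagonal, together with the convexity of $\cD_p(n+m)$, to produce explicit paths through invertibility from $0$ to $X$ and from $0$ to $Y$.

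First I would record the elementary observation that for every word $w$, $w(X\oplus Y)=w(X)\oplus w(Y)$, and hence for the given matrix polynomial $p$,
\[
 p(X\oplus Y) = p(X)\oplus p(Y)
\]
(up to the unitary shuffle identifying $M_\dd\otimes(M_n\oplus M_m)$ with $(M_\dd\otimes M_n)\oplus(M_\dd\otimes M_m)$). Consequently $p(A\oplus B)$ is invertible if and only if both $p(A)$ and $p(B)$ are invertible.

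Next, using that $\cD_p(n+m)$ is convex and contains both $0$ and $X\oplus Y$, the segment
\[
 t(X\oplus Y) = (tX)\oplus(tY), \qquad 0\le t\le 1,
\]
lies entirely in $\cD_p(n+m)$. In particular, $p(tX)\oplus p(tY)$ is invertible for every $t\in[0,1]$, so the first step forces $p(tX)$ and $p(tY)$ to be invertible for every $t\in[0,1]$. Thus $t\mapsto tX$ is a continuous path in $\posn$ connecting $0$ to $X$, and similarly for $Y$. Since $\cD_p(n)$ is by definition the connected component of $0$ in $\posn$, we conclude $X\in\cD_p(n)$, and likewise $Y\in\cD_p(m)$.

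There is no real obstacle here; the only point requiring care is to ensure the identification $p(X\oplus Y)=p(X)\oplus p(Y)$ is being used correctly for matrix-valued $p$ via the tensor description of evaluation from Section on Matrix-Valued Polynomials, but this is routine. The essential content is that convexity plus basicness lets us reduce the problem to the one-parameter family $t\mapsto t(X\oplus Y)$, whose block structure decouples into the two separate paths.
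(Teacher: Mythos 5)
Your proof is correct and is essentially the paper's own argument: restrict to the segment $t\mapsto t(X\oplus Y)$ using convexity, observe that the block-diagonal structure forces $p(tX)$ (and $p(tY)$) to be invertible for all $t\in[0,1]$, and conclude via the path-component definition of $\cD_p(n)$. You have merely spelled out the identification $p(X\oplus Y)=p(X)\oplus p(Y)$, which the paper leaves implicit.
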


\begin{proof}
   Let $Z=X\oplus Y\in\cD_p(n+m)$. 
   By convexity, $tZ\in\cD_p(n+m)$ for $0\le t\le 1$.
   It follows that  $p(tX)$ is invertible
   for $0\le t \le 1$ and so there is a path from $0$ to $X$ lying
   in $\cD_p(n)$. Thus $X\in\cD_p(n)$. Likewise for $Y$.
\end{proof}

\begin{remark} \label{rem:billWhines}
\rm
 It is not clear if Lemma \ref{lem:restrict}
 remains true with the weaker
 hypothesis
 that the closure of $\cD_p$ is convex. 
\qed \end{remark}

\begin{definition}\rm
 \label{def:matrix-convex}
  For the present purposes a graded set  $\mathcal C=(\mathcal C(n)),$
  where each $\mathcal C(n) \subset \mathbb S_n(\mathbb R^g),$
  is a bounded open {\bf matrix convex} \index{matrix convex} set if
\begin{enumerate}[(i)]
   \item each $\cC(m)$ is open and contains
     $0=(0,\dots,0)\in\mathbb S_m(\mathbb R^g)$;
   \item  { $\cC$  respects direct sums};
   \item {\bf $\cC$  respects
   simultaneous conjugation with contractions}:
   if $Y\in\cC(m)$ and $F$ is an $m\times k$ contraction,
    then
  \[
      F^{\star} Y F= (F^{\star}Y_1F,\dots,F^{\star} Y_gF)\in \cC(k); \mbox{ and}
  \]
   \item  each $\cC(m)$ is convex and bounded.
  \end{enumerate}
\end{definition}

   There are some harmless redundancies in the conditions
   above.  
    It is easy to see  that the convexity of  $\cC(m)$ 
     actually follows from items (ii)
    and (iii).   Indeed, given $X,Y\in\mathcal C(n)$,
   choose $F$ to be the $2n\times n$ matrix
 \[
   F= \frac{1}{\sqrt{2}} \begin{pmatrix} I_n \\ I_n\end{pmatrix}
 \]
   and note that
 \[
    \frac{X_j+Y_j}{2} = F^{\star} \begin{pmatrix} X_j & 0 \\ 0 & Y_j
    \end{pmatrix} F \qquad \mbox{for each } j.
 \]
   Similarly, if it assumed that $\cC$ is not empty, then that 
   $0\in\cC(n)$ for all $n$ follows from $(iii)$ by choosing $F=0$.

   An immediate consequence of item (iii) is,
    if $X\in\mathbb S_n(\mathbb R^g)$,
     $Y\in\mathbb S_m(\mathbb R^g)$ and $X\oplus Y\in \mathcal C(n+m),$
   then $Y\in \mathcal C(m)$.

\begin{theorem}
 \label{thm:convexities}
   If $p$
   satisfies the conditions of Assumption \ref{assume},
   then $\cD_p$ is a bounded open matrix convex set.
\end{theorem}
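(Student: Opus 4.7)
The plan is to verify in turn the four defining properties (i)--(iv) of an open matrix convex set from Subsection \ref{subsec:mustbebasic}. Three of them are essentially free: each $\cD_p(n)$ is open and contains $0$ by the very definition of the invertibility set (the connected component of $0$ in an open set), closure under direct sums is Lemma \ref{lem:convexities}(ii), and convexity and boundedness of each $\cD_p(n)$ are parts of Assumption \ref{assume}. Thus the substantive content is condition (iii): closure under simultaneous conjugation with contractions.

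To establish (iii), suppose $Y \in \cD_p(m)$ and $F \in M_{m\times k}(\mathbb R)$ is a contraction; I want $F^T Y F \in \cD_p(k)$. The strategy is an averaging-of-dilations trick. Set $G = \sqrt{I_k - F^T F} \in M_k(\mathbb R)$, so that $\begin{pmatrix} F \\ G \end{pmatrix}$ is an $(m+k)\times k$ isometry which can be completed to an orthogonal matrix
\[
W_1 = \begin{pmatrix} F & F_2 \\ G & G_2 \end{pmatrix} \in M_{m+k}(\mathbb R).
\]
The orthogonality relations for $W_1$ imply that flipping the sign of the second block column also produces an orthogonal matrix,
\[
W_2 = \begin{pmatrix} F & -F_2 \\ G & -G_2 \end{pmatrix}.
\]

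Since $0 \in \cD_p(k)$, the direct sum $Y \oplus 0 \in \cD_p(m+k)$ by Lemma \ref{lem:convexities}(ii), and then unitary invariance (Lemma \ref{lem:convexities}(i)) places both $W_1^T (Y\oplus 0) W_1$ and $W_2^T (Y\oplus 0) W_2$ in $\cD_p(m+k)$. A direct computation, coordinatewise in the tuple, shows that the off-diagonal blocks (which involve $\pm F^T Y_j F_2$) carry opposite signs for $W_1$ and $W_2$, while the diagonal blocks coincide; thus the arithmetic mean equals the block-diagonal tuple $F^T Y F \oplus F_2^T Y F_2$. Convexity of $\cD_p(m+k)$ places this mean inside $\cD_p(m+k)$, and then the restriction property, Lemma \ref{lem:restrict}, extracts $F^T Y F \in \cD_p(k)$, as required.

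The only subtlety -- and hence the main step -- is arranging the two dilations $W_1,W_2$ so that the average is a genuine direct sum; this is the one place where Lemma \ref{lem:restrict} (which needs convexity of $\cD_p$ in a crucial way, see Remark \ref{rem:billWhines}) is used. Once this sign-flip construction is in hand, no further analysis is needed, and (i)--(iv) collectively show $\cD_p$ is an open matrix convex set.
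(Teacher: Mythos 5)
Your proposal is correct and follows essentially the same route as the paper: the paper also reduces everything to condition (iii) and proves it by averaging two unitary conjugates of $Y\oplus 0$ that differ by a block sign flip (the paper uses the Julia matrix of the contraction as its specific orthogonal dilation), then invokes convexity and Lemma \ref{lem:restrict} to extract $F^TYF$. Your generic orthogonal completion of the isometry $\begin{pmatrix} F\\ G\end{pmatrix}$ is just a mild variant that also handles rectangular contractions directly.
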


\begin{proof}
   That $\cD_p$ is closed with respect to direct sums is
  part of Lemma \ref{lem:convexities} (and does not depend upon
  convexity or boundedness).

  To prove that $\cD_p$
  is closed with respect to simultaneous conjugation
  by contractions, suppose that $X\in\cD_p(n)$ and
  $F$ is a given $n\times k$ contraction.
  Let $U$ denote the Julia matrix (of $F$),
\[
  U=\begin{pmatrix} F  & (I_n-FF^{\star})^\frac12 \\
                 -(I_k-F^{\star}F)^{\frac12} & F^{\star} \end{pmatrix}.
\]
  Routine calculations show $U$ is unitary.

  Let $0$ denote the $g$-tuple of zero matrices of size $k\times k$.
  Then, since $X\in\cD_p(n)$ and $0\in\cD_p(k),$ the direct
  sum $X\oplus 0$ is in $\cD_p(n+k)$. Since  $\cD_p(n+k)$ is closed
  with respect to unitary conjugation both the 
  $g$-tuples of matrices
 \[
  \begin{split}
    Y=&U^{\star} \begin{pmatrix} X & 0\\0 & 0 \end{pmatrix} U \\
    Z=&\begin{pmatrix} I &0 \\0 & -I \end{pmatrix} Y
       \begin{pmatrix} I&0\\0 & -I\end{pmatrix}
  \end{split}
 \]
  are in $\cD_p(n+k)$.
  Using the convexity assumption on $\cD_p(n+k)$,
 \[
   \frac12 (Y+Z)=\begin{pmatrix} F^{\star} X F & 0\\0&
        (I-FF^{\star})^\frac12 X (I-FF^{\star})^\frac12 \end{pmatrix}
 \]
  is in $\cD_p(n+k).$  An application of
 the Lemma \ref{lem:restrict}
 implies $F^{\star}XF\in \cD_p(n)$.

   By hypothesis $\cD_p$ is bounded.
\end{proof}

\subsection{Compressions}
  Recall $\cP^\delta_d$ denotes the $1\times \delta$
  matrices whose entries are free polynomials 
  of degree at most  $d$ in  $g$ freely non-commuting variables. 
  Given $(X,v)\in \smatng \times
  (\mathbb R^\dd \otimes \mathbb R^n)$ define
  the subspace  $\cM=\cM(X,v)$ of  $\RR^{n}$ by
 \begin{equation}
  \label{eq:cM}
  \cM:  = \ \{q(X)v: \ q\in\cPNsigd \} \subset \mathbb R^n.
 \end{equation}  \index{$\cM$}
  Explicitly, 
  $v$ is a column vector of length $\delta$ with entries from
  $\mathbb R^n$ and 
 \begin{equation}
  \label{eq:qXv}
   q(X)v= \begin{pmatrix} q_1(X) \dots q_\dd(X) \end{pmatrix}
     \begin{pmatrix} v_1 \\ \vdots \\ v_\dd \end{pmatrix}
    =\sum q_j(X)v_j,
 \end{equation}
  where each $q_j$ is free  polynomial of degree at most $d$. 

   Let $P_{\mathcal M}$ denote the projection of $\mathbb R^n$
  onto  $\mathcal M$.  Consistent with previous 
  usage,  the notation $P_{\mathcal M}X|_{\cM}$
  is shorthand for  $(P_{\cM} X_1|_{\cM}, \dots, P_{\cM}X_g|_{\cM})$. 
  The integer $\nu= \delta \sum_{j=0}^{\Ndsig} g^j$ \index{$\nu$},
  the dimension 
  of the vector space $\cP_d^\delta$, is an upper bound
  for the dimension of the vector space $\cM$.

\begin{lemma}
 \label{lem1}
   Suppose $p$  satisfies the hypotheses of
   Assumption \ref{assume} and $n$ is a positive integer.
    If $(X,v) \in \hbd_p(n)$  and $\mu$ is the 
  dimension of $\cM=\cM(X,v),$  
   then  $(P_{\cM} X|_\cM, v) \in \hbd_p(\mu).$ In fact, 
   $t \PM X|_{\cM} \in \cD_p(\mu)$
   for $0\le t < 1$ and  $p(P_{\cM}X|_{\cM})v = 0$.
\end{lemma}

\begin{proof}
   From Lemma \ref{inhboundary}, $tX\in\cD_p(n)$
   for $0\le t<1$.
   Let  $V$ denote the inclusion of
   $\cM$ into $\mathbb R^n$.
   Since $V$ is a contraction and,
   by Theorem \ref{thm:convexities}, $\cD_p$
   is a (open) matrix convex set, 
   $t P_{\cM} X|_{\cM} =  V^{\star} tXV
   \in \cD_p(\mu)$.
   
    Writing $v$ as in equation \eqref{eq:qXv}, 
     for any word $w$ of length at most $d$ and 
   any $1\le j\le \delta$, 
 \[
      w(\PM X|_{\cM})v_j = \PM w(X)|_{\cM}v_j  = \PM w(X)v_j.
 \]
   Hence,
 \[
   p(\PM X|_{\cM})v= (I_\delta\otimes \PM) p(X)v = 0.
 \]
\end{proof}

\section{Separating Monic Linear Pencils}
\label{sec:hahnHello}
   This section  develops a 
   refinement of the matricial Hahn-Banach
   separation theorem of Effros-Winkler for
   the graded set $\cD_p$, 
   Proposition \ref{prop:Lindepm} in
   Subsection \ref{subsec:dominate}.
   A version of the Effros-Winkler
   separation theorem is the topic
   of the first subsection.

\subsection{A Version of the  Effros-Winkler Theorem}
 \label{sec:EW-really}
   This subsection
  contains a proof of  the separation theorem
  of Effros and Winkler \cite{EW97} in the
  special case of certain  matrix convex subsets
  of $\mathbb S(\mathbb R^g)=(\mathbb S_n(\mathbb R^g))_{n=1}^\infty$.
  The specialization makes the proof
  of Proposition \ref{prop:sharp}, which
  is applied in the following subsection, 
  simpler than that of the strictly more general version
  in \cite{EW97}.
  On the other hand Proposition \ref{prop:sharp} is
  not explicitly covered by the results in \cite{EW97}.

\def\oX{X^{\mbox{b}}}

\def\be{\mathbf{e}}
\def\LL{\cL}
\def\cTn{\mathcal T_n}
\def\tr{\mbox{tr}}
\def\hh{T}
\def\hhs{\mathfrak{T}}
\def\Rn{\mathbb R^n}

 Given a positive integer $n$, let
 $\cTn$ denote the positive semi-definite
 $n\times n$ matrices (with real entries)
 of trace one.  Each $T \in \cTn$ corresponds
 to a state on $M_n$, the $n\times n$ matrices, via the
 trace,
\[
  M_n \ni A \mapsto \tr(AT).
\]
  Note that $\cTn$ is a convex, compact subset of $\mathbb S_n,$
  the symmetric $n\times n$ matrices.  

 The following
 lemma is a version of  Lemma 5.2 from [EW].
 An affine linear mapping $f:\mathbb S_n \to \mathbb R$ is a
 function of the form $f(x)=a_f +\lambda_f(x)$,
 where $\lambda_f$ is linear and $a_f\in\mathbb R$.

\begin{lemma}
 \label{lem:cone}
  Suppose $\mathcal F$ is a convex set of affine linear
  mappings $f:\mathbb S_n \to \mathbb R$. If for each $f\in \mathcal F$
  there is a $\hh \in\cTn$ such that $f(\hh)\ge 0$,
  then there is a $\hhs\in \cTn$ such that
  $f(\hhs)\ge 0$ for every $f\in\mathcal F$.
\end{lemma}

\begin{proof}
   For $f\in\mathcal F$, let
 \[
   B_f =\{\hh\in \cTn: f(\hh)\ge 0\}\subset \cTn.
 \]
  By hypothesis each $B_f$ is non-empty and
  it suffices to prove that
 $$
   \cap_{f\in\mathcal F} B_f \neq \emptyset.
 $$
  Since each $B_f$ is compact, it suffices to
  prove that the collection $\{B_f: f\in\mathcal F\}$
  has the finite intersection property.  Accordingly,
  let $f_1,\dots,f_m\in\mathcal F$ be given. Arguing
  by contradiction, suppose
\[
  \cap_{j=1}^m B_{f_j} =\emptyset.
\]
Define $F:\mathbb S_n \to \mathbb R^m$  by
\[
  F(\hh)=(f_1(\hh),\dots,f_m(\hh)).
\]
 Then $F(\cTn)$ 
 is both convex and compact because $\cTn$
 is both convex and compact and each
  $f_j$, and hence $F$, is affine linear.
 Moreover, 
  $F(\cTn)$  does not intersect 
 \[
   \mathbb R^m_+=\{x=(x_1,\dots,x_m): x_j\ge 0 \mbox{ for each } j\}.
 \]
 Hence there is a linear functional $\lambda:\mathbb R^m \to \mathbb R$
 such that $\lambda(F(\cTn))<0$ and $\lambda(\mathbb R_+^m) \ge 0$.
 There exists $\lambda_j$ such that
$
 \lambda(x) = \sum \lambda_j x_j.
$
  Since $\lambda(\mathbb R^m_+) \ge 0$ it follows that each
  $\lambda_j\ge 0$ and since $\lambda\ne 0$, for at least one $k$,
  $\lambda_k>0$.  Without loss of generality, 
  it may be assumed that $\sum \lambda_j=1$. 
  Let
\[
  f=\sum \lambda_j f_j.
\]
 Since $\mathcal F$ is convex, 
 it follows that $f\in\mathcal F$. On the other hand,
 $f(T)=\lambda(F(T)).$  Hence 
 if $T\in \cTn,$ then  $f(T)<0$.
 Thus, for this $f$ there does not exist
 a $T\in \cTn$ such that $f(T)\ge 0$,
 a contradiction which completes the proof.
\end{proof}

\begin{lemma}
 \label{lem:cT}
  Let $\cC=(\cC(n))$ denote an open matrix convex subset of
  the graded set $\smatg.$ 
 \ Let $n$ and a linear functional \
  $\Lambda: \smatng\to \mathbb R$ be given.
  If 
   $
    \Lambda(X) \le 1
  $
   for each $X\in\cC(n)$,
  then there is a
  $\hhs\in \cTn$ such that
  for each $m$, each $Y\in \cC(m),$ and each
  $m\times n$ contraction (matrix) $C$, 
 \[
   \Lambda(C^{\star} Y C) \le \tr(C\hhs C^{\star}).
 \]

\end{lemma}


\begin{proof}
 Given a positive integer $m,$ a  tuple $Y$ in $\cC(m)$
 and an $m\times n$ contraction matrix $C$, define
 $f_{Y,C}:\mathbb S_n \to \mathbb R$ by
\[
  f_{Y,C}(\hh)=\tr(C\hh C^{\star}) - \Lambda(C^{\star}YC).
\]

  Now we show that the collection $\mathcal F=\{f_{Y,C}:Y,C\}$ is
  a convex set. Start with a positive integer $s,$ 
  nonnegative numbers $\lambda_1,\dots,\lambda_s$ with
  $\sum \lambda_j=1,$ and with $(Y_j,C_j)$ 
  for $j=1,\dots,s$ where $Y_j\in\cC(m_j)$ and $C_j$ are $m_j\times n$
  contraction matrices. Let $Z=\oplus Y_j$
  and let $F$ denote the (block) column matrix with entries
  $\sqrt{\lambda_j} C_j$. Then $Z\in \cC(m)$ where $m=\sum m_j$ and
\[
  F^TF =\sum \lambda_j C_j^TC_j \preceq \sum \lambda_j I =I.
\]
 By definition
\[
 \sum \lambda_j C_j^T Y_j C_j = F^T ZF
\]
 and
\[
 \sum \lambda_j \tr(C_j T C_j^T) = \tr(F TF^T).
\]
 Therefore
\[
  \sum \lambda_j f_{Y_j,C_j}(\hh) = f_{Z,F}(\hh).
\]

  If $C$ has (operator) norm one, choose $\hh=\gamma \gamma^{\star}$
  where $\gamma$ is a unit vector such that
\[
  \|C\gamma\|=\|C\|=1.
\]
  It follows that $\gamma\gamma^{\star}\in \cTn$ and 
\[
  f_{Y,C}(\gamma \gamma^{\star})
    =\|C\|^2 - \Lambda(C^{\star} Y C) = 1-\Lambda(C^{\star}YC).
\]
 Since $C^{\star}YC \in \cC(n)$, the right hand side above is non-negative.
 If the contraction $C$ does not have norm $1$, but is not zero, a simple scaling
 argument shows that $f_{Y,C}(\gamma \gamma^{\star})\ge 0$ still.
 Consequently, for each $f_{Y,C}$ there is a $T\in\cTn$ such
 that $f_{Y,C}(T)\ge 0$. From 
 Lemma \ref{lem:cone}, there is a $\hhs \in \cTn$ such that
 $f_{Y,C}(\hhs)\ge 0$ for every $Y$ and $C$. 
\end{proof}

  Given $\epsilon>0$,
  the {\bf free $\epsilon$-neighborhood of $0$},
  denoted  $\cN_\epsilon,$ is the graded set
  $(\cN_\epsilon(n))_{n=1}^\infty$ where
 \[
   \cN_\epsilon(n) =\{X\in\smatng : \sum \| X_j \| < \epsilon  \}.
 \]

\begin{lemma}
 \label{lem4}
   If $p$ satisfies the conditions of Assumption \ref{assume},
   then $\cD_p$ contains an $\epsilon>0$ neighborhood of $0$; i.e.,  
   there is an $\epsilon>0$ such that
   $\cN_\epsilon(n)\subset \cD_p(n)$ for each $n$.

   Moreover,
   if the monic linear
   pencil $L=I+\sum A_j x_j$
   is positive definite on $\cD_p$,
   then $\|A_j\|\le \frac{1}{\epsilon}$ for each $j$.
\end{lemma}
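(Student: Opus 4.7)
My plan for Lemma \ref{lem4} is to treat the two assertions separately: the first by a dimension-free continuity argument, the second by testing $L$ against simple scalar points provided by the first.

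For the existence of the neighborhood, I would exploit that on $\smatng$ we have $p(0)=p_\emptyset\otimes I_n$, whose inverse has norm $\|p_\emptyset^{-1}\|$ independent of $n$. Writing
\[
p(X)-p(0)=\sum_{|w|\ge 1}p_w\otimes w(X)
\]
and using $\|w(X)\|\le(\sum_j\|X_j\|)^{|w|}$ word by word gives the dimension-free estimate
\[
\|p(X)-p(0)\|\le\sum_{|w|\ge 1}\|p_w\|\,\Bigl(\sum_j\|X_j\|\Bigr)^{|w|}.
\]
Choosing $\epsilon>0$ so small that $\|p_\emptyset^{-1}\|$ times the right-hand side evaluated at $\sum_j\|X_j\|=\epsilon$ is strictly less than $1$ forces $p(X)$ to be invertible on all of $\cN_\epsilon$ by a Neumann series argument, uniformly in $n$. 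Connectedness with $0$ comes for free: the segment $\{tX:0\le t\le 1\}$ also lies in $\cN_\epsilon$, so $p(tX)$ is invertible along the whole path and therefore $X\in\cD_p(n)$.

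For the norm bound on the $A_j$, I would test $L$ at one-dimensional scalar tuples supplied by the first part. Fix $j$ and let $\lambda$ be any eigenvalue of the symmetric matrix $A_j$ with unit eigenvector $v\in\RR^\ell$. For real $t$ with $|t|<\epsilon$, the scalar ($n=1$) tuple $X$ whose $j$-th entry equals $t$ and all other entries vanish satisfies $\sum_i\|X_i\|=|t|<\epsilon$, so $X\in\cD_p(1)$ by the first part. Since $L(X)=I+tA_j$, the hypothesis $L(X)\succ 0$ yields
\[
0<\langle L(X)v,v\rangle=1+t\lambda\qquad\text{for every }|t|<\epsilon.
\]
Allowing $t$ to approach $\pm\epsilon$ with opposite signs gives $|\lambda|\le 1/\epsilon$, and taking the maximum over eigenvalues of the symmetric matrix $A_j$ yields $\|A_j\|\le 1/\epsilon$.

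The only delicate point is in the first assertion: producing an $\epsilon$ that works uniformly across all levels $n$. This is handled by the observation that both $\|p_\emptyset^{-1}\otimes I_n\|$ and the word bound $\|w(X)\|\le(\sum_j\|X_j\|)^{|w|}$ are independent of $n$, so the Neumann series estimate is uniform. Once the neighborhood is in hand, the coefficient bound is an elementary eigenvalue computation carried out entirely at the scalar level.
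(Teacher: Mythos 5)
Your proposal is correct and follows essentially the same route as the paper: a uniform (dimension-free) perturbation bound on $p(X)-p(0)$ over $\cN_\epsilon$ to get invertibility of $p(tX)$ along the segment to $0$ (the paper phrases the Neumann-series step via the smallest eigenvalue modulus of $p(0)$ rather than $\|p_\emptyset^{-1}\|$, which is the same thing), and then testing $L$ at the scalar points $\pm t e_j$ for $t<\epsilon$ to bound the eigenvalues of each $A_j$ by $1/\epsilon$.
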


\begin{proof}
  Write $p$ as in equation \eqref{pww}. Thus
  each $p_w$ is a $\dd\times \dd$ matrix.
  Let $M$ denote the maximum of $\{\|p_w\| : 1\le |w|\le d\}.$
  Let $\tau=\sum_{1}^d g^j$. Thus $\tau$
  is the number of words $w$ with $1\le |w| \le d$.

  Let $0<\Delta$ denote the minimum of
  $\{|\lambda|: \lambda \mbox{ is an eigenvalue of } p(0)\}$.
  Choose $\epsilon =\min\{1,\frac{\Delta}{\tau(M+1)}\}$.

   Let $X\in\smatng$ be given.
   If $\|X_j\| <\epsilon$ for $1\le j\le g$, then
  $\|w(tX)\|\le \frac{\Delta}{\tau(M+1)}$ for non-empty words $w$
   and $0\le t\le 1$. Hence,
 \[
    \| \sum_{1\le |w|\le d} p_w \otimes w(tX) \|
      \le \sum_{1\le |w|\le d} \|p_w\| \, \|w(tX)\| <\Delta.
 \]
   It follows that $p(tX)$ is invertible
   for $0\le t\le 1$ and thus $X\in\cD_p(n).$
   Consequently $\cD_p(n)$ contains 
   $\cN_\epsilon(n).$

   Now suppose $L$ is a monic  linear pencil which
   is positive definite on $\cD_p$ and thus
    on $\cN_\epsilon$. For $0\le t<\epsilon$,
   the points $\pm t e_j$ are in $\cN_\epsilon(1)$
   and hence
    $L(\pm t e_j) = I\pm t A_j \succeq 0$. It follows
  that $\pm A_j \preceq \frac{1}{\epsilon}I$ and thus
  $\|A_j\|\le \frac{1}{\epsilon}$.
\end{proof}

\begin{proposition}
 \label{prop:sharp}
 Let $\cC=(\cC(n))$ denote a bounded  open    matrix convex
 subset of the graded set  $\smatg$ which
 contains a free $\epsilon$-neighborhood of $0.$
 If
 {\rm $\oX\in\smatng$} is in the boundary of $\cC(n)$,
 then there is a monic   linear pencil
 $L$ (of size $n$) such that $L(Y)\succ 0$ for all $m$ and
 $Y\in\cC(m)$ and such that
 {\rm $L(\oX)$} is singular.
\end{proposition}

\def\beq{\begin{equation}
}
\def\eeq{\end{equation}}

\begin{proof}
 By the usual Hahn-Banach separation theorem
 and the assumption that $\cC(n)$ contains
 an $\epsilon$-neighborhood of $0$,
 there is a linear functional $\Lambda:\smatng \to \mathbb R$
 such that $\Lambda(\oX) =  1 > \Lambda(\cC(n))$.

 $\mbox{From}$ Lemma \ref{lem:cT} there is a positive
 semi-definite $n\times n$ matrix $T$ of trace one such that
\beq
\label{eq:absLMI}
\tr(CTC^{\star})- \Lambda(C^{\star}YC) \ge 0
\eeq
 for each $m$, each $m\times n$ contraction $C$, and
 each $Y\in\cC(m)$.
 Note this inequality is sharp in the sense,
\beq
\label{eq:absLMI0}
\tr(T)- \Lambda(\oX) = 0.
\eeq
The rest of the proof amounts to
expressing  \eqref{eq:absLMI} in a concrete way
in terms of a monic  linear pencil.

 Let $\{\be_1,\dots,\be_g\}$ 
  denote the
 standard orthonormal basis for $\mathbb R^g.$
 Given $1\le \ell \le g$, define
 a bilinear form on $\Rn$ by
\[
  \cB_\ell(c,d)=\frac12 \Lambda((cd^{\star} + dc^{\star})\otimes \be_\ell)
\]
   for  $c,d\in\mathbb R^n$.
  There is a unique real symmetric $n\times n$ matrix $B_\ell$
  such that
\[
  \cB_\ell(c,d)=\langle B_\ell c, d\rangle.
\]

 Let $L_B$ denote the linear polynomial  $L_B(x)=\sum_1^g B_j x_j$.
 Fix a positive integer $m$ and let 
 $\{e_1,\dots,e_m\}$ denote the standard orthonormal basis for
  $\mathbb R^m$. 
 Let $Y=(Y_1,\dots,Y_g)\in\cC(m)$ be given and consider
  $L_B(Y)$.
 Given a  vector
 $\gamma=\sum_{j=1}^m  \gamma_j \otimes e_j$
 contained in $\Rn\otimes \mathbb R^m$, compute
\[
 \begin{split}
 \langle L_B(Y)\gamma,\gamma \rangle
  =& \sum_{i,j}\sum_\ell \langle  B_\ell\gamma_j,
           \gamma_i\rangle \langle Y_\ell e_j,e_i\rangle \\
  = & \frac12 \sum_{i,j} \sum_\ell \Lambda( (\gamma_j \gamma_i^{\star}
              +\gamma_i \gamma_j^{\star})\otimes \be_\ell)
                \langle Y_\ell e_j,e_i\rangle  \\
  = & \Lambda (\sum_{i,j} \gamma_i (\sum_\ell \langle Y_\ell e_j,e_i\rangle
            \otimes \be_\ell) \gamma_j^{\star}) \\
  = & \Lambda(\Gamma  Y \Gamma^{\star}),
 \end{split}
\]
 where $\Gamma$ is the matrix with $j$-th column $\gamma_j$.
 Using equation \eqref{eq:absLMI}
\[
 \begin{split}
  \Lambda(\Gamma  Y \Gamma^{\star}) \le & \tr(\Gamma^{\star} T \Gamma) \\
  = & \sum \langle T \gamma_j,\gamma_j \rangle \\
   =& \sum \langle (T\otimes I) \sum_j \gamma_j\otimes e_j,
       \sum_k \gamma_k \otimes e_k \rangle \\
   =& \langle (T\otimes I) \gamma,\gamma\rangle.
 \end{split}
\]
 Thus,  the  linear pencil
$T-L_B$ defined by
 $(T-L_B)(x)=T-\sum B_j x_j$
satisfies
\beq
\label{eq:Tlmi}
 [T - L_B](Y) \succeq 0
 \eeq
 for every $m$ and $Y\in\cC(m)$.

 Since $\cC$ contains the $\epsilon$-neighborhood of $0$,
  it contains $\pm \frac{\epsilon}{2} e_j\in \mathbb R^g.$
  Hence, 
 \[
    0\preceq T - \pm \frac{\epsilon}{2} L_B(e_j) 
    = T- \pm \frac{\epsilon}{2} B_j.
 \] 
 Thus,  while $T$ need not be invertible, it does satisfy
  $-T\preceq \frac{\epsilon}{2}  B_j \preceq T$ 
   for  each $j$ and hence restricting to 
  the range of $T$ (kernel of $T^{\star}$)
  it can be assumed (passing to a space of smaller dimension
  if necessary), that $T$ is invertible.  Finally,
  multiplying left and right by $T^{-\frac12}$ produces a
  linear polynomial  $\cL(x) = \sum_j A_j x_j$
  such that $(I- \cL )(Y)\succeq 0$ if
  and only if $(T-L_B)(Y) \succeq 0$.

 On the other hand,  computing as above, \eqref{eq:absLMI0}
   becomes
 $$
  \langle (T-L_B)(\oX) e,e\rangle =0 \ \ \
 \mbox{with}  \ \ \ e=\sum e_j\otimes e_j.
$$
 Since $\oX$ is in the closure of $\cC(n)$, 
  $(T-L_B)(\oX)\succeq 0$. 
  Thus $(T-L_B)(\oX) e=0$
  and since $[T \otimes I] e \ne 0,$ it follows
  that $(I- \cL)(\oX)$ is singular.
  Set $L= I - \cL$.

 Finally, the assumption
 that $\cC$ is open implies that $L$ is in fact
 positive definite, not just positive semi-definite, on $\cC$. 
 The proof of this statement is very similar to
 that of Lemma \ref{lem:nochangestart}. The details are omitted.
\end{proof}

\subsection{Effros-Winkler and Invertibility Sets}
 \label{subsec:EW}
  The following lemma is both a  refinement
  and specialization of the free
  Hahn-Banach separation theorem of Effros and Winkler \cite{EW97}.
  It is specialized
  to convex bounded 
  sets $\cD_p=(\cD_p(n));$ and
  refined in that it separates a point on the boundary of $\cD_p(m)$
  from  $\cD_p$.

\begin{lemma}
 \label{sharp-EW}
    Suppose $p$ satisfies the conditions of Assumption \ref{assume}.
    If $X\in\bd_p(m)$, then there exists a monic  linear pencil
   $L$ of size $m$ such that $L$ is positive definite
   on each $\cD_p(n)$ and $L(X)$ is singular.
\end{lemma}

\begin{proof}
  By Theorem \ref{thm:convexities}, $\cD_p$ is
  a bounded open matrix convex set. 
  By Lemma \ref{lem4},  $\cD_p$ 
  contains a free $\epsilon$-neighborhood of $0$. 
  Hence an application of Proposition
  \ref{prop:sharp} proves the lemma. 
\end{proof}

 The following is  a more quantitative version of
 Lemma \ref{sharp-EW}. 
 Recall $\nu=\delta \sum_{0}^d g^j.$

\begin{lemma}
 \label{cor-sharp-EW}
   Suppose $p$ satisfies Assumption \ref{assume}.
   If $(X,v)\in\hbd_p(m)$, then there exists a monic  linear
    pencil $L$ of  size $\ell\le \nu,$ where $\ell$
    is the dimension of 
 \[
  \cM =\cM(X,v)=\{q(X)v: q\in \cPNsigd \}\subset \mathbb R^m,
 \]
   and a non-zero vector $w\in \RR^\ell\otimes \cM$
   such that $L$ is positive definite on each 
   $\cD_p(n)$ and $L(X)w=0$.  
\end{lemma}

\begin{remark}\rm
 \label{form-w}
   In terms of $\{e_1,\dots,e_\ell\},$ the standard basis for $\RR^\ell$,
   there exists $m_1,\dots,m_\ell\in\cM$ such that
   $w=\sum e_\alpha\otimes m_\alpha$.  From the definition of $\cM$, there
   thus exists 
   $q^\alpha \in \cPNsigd$ such that $m_\alpha=q^\alpha(X)v$ and hence,
 \[
   w=\sum e_\alpha \otimes q^\alpha(X)v.
 \]
\qed \end{remark}

\begin{proof}
  Let $Y=\PM X|_{\cM}.$ 
  By Lemma \ref{lem1},  $(Y,v)\in \hbd_p(\ell)$. By Lemma \ref{sharp-EW},
  there exists  a monic  linear pencil $L$
  of size $\ell$ such that $L$ is  positive definite on each 
   $\cD_p(n)$ and $L(Y)$ is singular. Hence, there is
  a non-zero $w\in\RR^\ell\otimes \cM$ such that
  $L(Y)w=0$. Since 
 \[
  \begin{split}
   \langle L(X)w,w\rangle
      = & \langle (I_\ell\otimes \PM) \; L(X) \; (I_\ell\otimes \PM) w,w\rangle \\
     = & \langle  L(Y)w,w\rangle \\
         =& 0, 
  \end{split}
 \]
    and since $L(X)\succeq 0$,  the conclusion $L(X)w=0$
   follows.
\end{proof}

\subsection{Dominating Points and Separation}
 \label{subsec:dominate}
  Proposition \ref{prop:Lindepm} below relates dominating
  points to the separating monic  linear
  pencils  produced by
  Lemma \ref{cor-sharp-EW}. It is the
  main result of this section
  and the last ingredient needed for
  the proof of Theorem \ref{thm:main} in the next section. 

  Let $|w|$ denote the length of a word $w$. By convention,
  $|\emptyset|=0$.

\begin{proposition}
 \label{prop:Lindepm}
   Suppose $p$ satisfies Assumption \ref{assume}.
 If $S=(S(n))$ is a non-empty graded subset
  of the graded set $\hbd_p$ which \rds, 
  then there exists
 a monic  linear pencil $L$ which is positive
 definite on each $\cD_p(n)$ and singular on 
  $S\cap S_*=(S(n)\cap S_*(n));$  that is,
 if $X\in\cD_p(n)$, then $L(X)\succ 0$, and  
 if $(X,v)\in S(n)\cap S_*(n)$, then $L(X)$ is singular. 
 Further, the size of $L$ can be chosen to be at most
 the maximum of the dimensions of
 the subspaces  $\{q(Y)w: q\in\cPNsigd\}$
 over $(Y,w)\in S$ and is therefore at most $\mbox{dim }\cPNsigd=\nu$.
\end{proposition}

\begin{proof}
    Let $\mu$ denote
  the maximum of the dimensions of 
   the subspaces   $\{q(Y)w:q\in\cPNsigd\}$
   for $(Y,w)\in S$.

     Given $(X,v) \in S(m)$,  let $\LC_{X}$
     denote the set of monic  linear pencils
  $L$ of size $\mu$ which are both positive definite
  on each $\cD_p(n)$ and for which $L(X)$ is singular.
  By identifying $L=I+\sum A_j x_j$ with
  the tuple $A=(A_1,\dots,A_g)\in\mathbb S_\mu(\mathbb R^g)$,
  the collection $\Lambda_X$ may be viewed
   as a subset of a finite dimensional vector space.

   By Lemma \ref{cor-sharp-EW},
   each $\LC_{X}$ is
   non-empty.  By Lemma \ref{lem4}
   each $\Lambda_X$ is  bounded.
   If a sequence from $\Lambda_X$ converges
   to the monic  linear pencil $L$,
   then $L(Y)\succeq 0$ for 
   each $n$ and $Y\in \cD_p(n)$.
   By an application of Lemma \ref{lem:nochangestart},
   it follows that $L$ is in fact positive definite
   on each $\cD_p(n)$. Hence $\Lambda_X$ is
   closed and thus compact.

  Given an $s$ and
  $(X^j,v^j)\in 
  \SS(m_j) \cap \SS_*(m_j) \subset \hbd_p(m_j)$
   for $1\le j\le s$, 
  let $(W,u)=\oplus (X^j,v^j).$ Since $S$ is closed
  with respect to direct sums, $(W,u)\in S(m),$
  where $m=\sum m_j$. 

  Concordant with earlier usage, let 
\[
 \cM(W,u):= \  \{ q(W)u: \ q \in \cPNsigd \; \}.
\]
  By   Lemma \ref{cor-sharp-EW} there is
  a monic  linear pencil
  $L=I+\sum A_j x_j$ of size $\mu$  such that
  $L$ is positive definite on each $\cD_p(n)$
  and  a non-zero vector $\gamma\in \RR^\mu \otimes \cM(W,u)$
  such that $L(W)\gamma=0$.
  From the definitions of $\cM(W,u)$ and $\RR^\mu \otimes \cM(W,u)$,
  there exists $q^\alpha\in\cPNsigd$ for
  $1\le \alpha\le \mu,$ such that
\[
 \gamma = \sum_{\alpha=1}^\mu e_\alpha \otimes q^\alpha(W)u.
\]

  Let
\[
  q=\sum_{\alpha=1}^\mu e_\alpha \otimes q^\alpha
     =\begin{pmatrix} q^1 \\ \vdots \\q^\mu \end{pmatrix}.
\]
  Thus $q$ is a $\mu\times \dd$ matrix of
  polynomials of degree at most $d$;  that is,
  $q\in \cP_d^{\mu\times \dd}.$ Further,
\[
 \gamma = q(W)u.
\]

  Up to unitary equivalence (the canonical shuffle),
\[
  L(W)\gamma = L(W)q(W)u =
   \begin{pmatrix} L(X^1)q(X^1)v^1 \\ \vdots \\ L(X^s)q(X^s)v^\mu
    \end{pmatrix}.
\]
  Let
\[
  \gamma_j = q(X^j)v^j
   = \begin{pmatrix} q^1(X^j)v^j \\ q^2(X^j)v^j \\ \vdots \\ q^\mu(X^j)v^j
       \end{pmatrix}.
\]
 Since $L(W)\gamma=0$,
\begin{equation}
 \label{eq:qXqY}
    L(X^j)\gamma_j = 0
\end{equation}
 for each $1\le j\le s.$

  To prove that each $\gamma_j\ne 0$ we
  now invoke the hypothesis that
  each $(X^j,v^j) \in S(m_j) \cap S_*(m_j).$
  If $\gamma_k=0$ (for some $k$), then
  $q^\alpha(X^k)v^k=0$
  for each $\alpha$.
  By Lemma \ref{all-or-none},
  for a fixed  $\alpha$, either $q^\alpha(X^j)v^j=0$
  for every $j$ or $q^\alpha(X^j)v^j\ne 0$ for every $j$.
  Since $q^\alpha(X^k)v^k=0$ it follows that 
  that $q^\alpha(X^j)v^j=0$ for every $j$ and
  every $\alpha$. Thus each $\gamma_j=0$
  and hence $\gamma=0$, a contradiction.

   Since, for each $j$, we have $\gamma_j\ne 0$, but $L(X^j)\gamma_j=0$,
   it follows that
   $L\in \LC_{X^j}$.
   This proves
 \[
   \cap_{j=1}^s \LC_{X^j}
   \ne \emptyset.
 \]
  Consequently, the collection 
  $\{ 
  \LC_{X}
  : (X, v) \in S(n)\cap S_*(n), \ \  1\le n \}$
  of compact sets
  has the finite intersection property. Hence
  the full intersection is non-empty and any
  $L$ in this intersection is positive definite
  on $\cD_p$ and singular on all of $S(n)\cap S_*(n)$ for each $n$
  (meaning, for each $n$,
    if $(X,v)\in S(n)\cap S_*(n)$, then $L(X)$ is singular).
\end{proof}

\begin{corollary}
 \label{cor:Lind}
   If $p$ satisfies Assumption \ref{assume}, then 
   the graded set  $(\hbd_p)_*=(\hbd_p(n)_*)$ is non-empty and
   there is a monic  linear pencil $L$ which is
   positive definite on $\cD_p$ and singular
   on  $(\hbd_p)_*$;  that is, for each $n$, 
  if $X\in\cD_p(n)$ then $L(X)\succ 0$, and if $(X,v)\in (\hbd_p)_*(n)$,
  then $L(X)$ is singular. 
\end{corollary}

\begin{proof}
  Note $\hbd_p \cap (\hbd_p)_* =(\hbd_p)_*$ and apply 
  Proposition \ref{prop:Lindepm} with $S=\hbd_p.$
 \end{proof}

\section{Theorem \ref{thm:main}}
 \label{sec:proof}
  Theorem \ref{thm:main} is an immediate consequence of 
  the following result. 

\begin{theorem}
  \label{thmL}
   Given $p$ satisfying
   Assumption \ref{assume},
   there exists a monic  linear pencil
   $L$ such that $L$ is positive definite on 
  each $\cD_p(n)$
   and $L(X)$ has a kernel for every $n$ and  $X\in\bd_p(n)$.
   Hence, the graded sets $\cD_p=(\cD_p(n))$
   and $\cD_L=(\cD_L(n))=(\{X\in\smatng: L(X)\succ 0\})$
    are equal.
\end{theorem}

\def\ie{that is}

\begin{proof} 
 Recall, for  $L$, a
  monic linear pencil,  $i(L)=(i(L)(n))$ is the graded set defined by 
\[
   i(L)(n):= \ \{(Y,w)\in\hbd_p(n) : \ L(Y) \mbox{ is  invertible } \}.
\]
   We argue by contradiction.
   Accordingly, suppose
   for each monic  linear pencil $L$ which is
   positive definite  on $\cD_p$ the graded
   set $i(L)$ is non-empty.

   Let $\mfS$ denote pairs $(\SS, L)$ with $\SS=(S(n))$ a
   \pnclosed \ graded subset of the graded set $\hbd_p$
     and $L$ a
    monic  linear pencil satisfying:
\begin{enumerate}[(i)]
  \label{Lproperties}
   \item $L$ is positive definite  on $\cD_p$;
   \item    $L$ is singular on  $S_*$;  and
   \item  $i(L) \subset S$.
 \end{enumerate}

  Note that $\mfS$ is not empty since, by Corollary \ref{cor:Lind},
  there is an $L$ such that $(\hbd_p,L)\in\mfS$.
  Let $\mfS_1$ denote the collection of
  graded sets $\SS$ occurring
  in the  pairs $(\SS, L)$ belonging to  $\mfS$.
  Choose a minimal (with respect to term wise set inclusion) 
   graded set $S$ in $\mfS_1,$ whose existence is implied
  by Lemma \ref{lem:clos}\eqref{it:minelement}.
  We will show that $S$ is not minimal, a contradiction
  which will complete the proof.

  Since $S\in\mfS_1$,
  there exists an $L$ satisfying the conditions
  (i)(ii)(iii) with respect to this $S$; \ie,
  $(S,L) \in \mfS$.
  By assumption,  $i(L)(k)\ne \emptyset$ for some $k$. 
  By Proposition \ref{propL4}, $i(L)_z(m)\subsetneq S_z(m)$
  for some $m$.
  Since also $S$ is $\cPNsigd$ closed ($S=S_z$), 
 \begin{equation}
  \label{strict}
    i(L)_z  \subsetneq S.
 \end{equation}

 Using the fact that the graded set $i(L)$
 is non-empty and  respects  direct sums,
 Proposition \ref{prop:Lindepm}
 produces a monic  linear pencil $M$ which
 is positive definite  on each $\cD_p(n)$ and
 singular on each $i(L)(n) \cap i(L)_*(n)$.
 The proof now proceeds by showing
 $(i(L)_z, L\oplus M)\in \mfS$, which, by
 the strict inclusion in equation \eqref{strict},
 contradicts the minimality of $S$.

 From the construction,  $L\oplus M$ is positive
 definite  on each $\cD_p(n)$; \ie, $L\oplus M$
 satisfies condition (i).

 By Lemma \ref{lem:yesdomin} the graded set $i(L)_*$ is not empty.
 Suppose now that $(X,v) \in (i(L)_z)_*(n) = i(L)_*(n)$
  (see Lemma \ref{lem:domin}\eqref{it:Astvsz}).
 If  $(X,v) \in i(L)(n)$, then
 $M(X)$, and hence $(L\oplus M)(X)$ is singular.
 On the other hand, if $(X,v)\notin i(L)(n),$
 then $L(X)$, and hence $(L\oplus M)(X)$ is singular.
 Thus, if $(X,v)\in (i(L)_z)_*$, then $(L\oplus M)(X)$
 is singular. Hence $L\oplus M$ satisfies condition
 (ii) with respect to $i(L)_z$.

 Finally, for each $n$,  $i(L\oplus M)(n) \subset i(L)(n)\subset i(L)_z(n)$
 and thus $L\oplus M$ satisfies condition
 (iii) with respect to $i(L)_z$.
 Hence $(i(L)_z, L\oplus M)\in \mfS$ and the proof is complete.
\end{proof}

\subsection{Estimates on the Size of the Linear Pencil}
 \label{subsec:estimates}
  This subsection gives estimates on the size  of the 
  monic linear pencil $L$ needed in 
    Theorem \ref{thm:main}.
  Recall  $\nu=\dd \sum_0^d g^j$ is
  the dimension of $\cP_d^\delta$.

\begin{lemma}
 \label{prop:quant}
   The size of $L$ need in Theorem \ref{thm:main}
  is at most $\frac{\nu(\nu+1)}{2}$.
\end{lemma}

\begin{proof}[Sketch of proof] 
    The proof of Theorem \ref{thmL} can
  be viewed as a recursive algorithm 
  for constructing $L$ as a
  direct sum $L=\oplus_{j=0}^k L_j$.  The algorithm
  terminates in at most $\nu$ steps
  and, using the estimate afforded by
  Proposition \ref{prop:Lindepm},  
  the dimension of $L_j$ (its matrix size) at the $j$-th step
  is at most $\nu-j$.  Thus $\frac{\nu(\nu+1)}{2}$
  is an upper bound on the size of $L$.
\end{proof}

  In the special case that $p(0)=p_\emptyset$ is positive definite,
   $\cD_p(n)$ is equal to  the component of $0$ of the
   set $\{X\in\smatng : p(X)\mbox{ is positive definite} \}$ and 
   accordingly $\cD_p$ is called the 
  {\bf positivity set} of $p$. \index{positivity set}
  In this case it can be assumed that 
  $p(0)=I_\dd$.  Moreover, the estimate on the size of $L$
  needed in Theorem \ref{thm:main} is reduced dramatically
  from that given in Proposition \ref{prop:quant} above,
  because, as outlined below, the estimate of the size
  of the pencil in Proposition \ref{prop:Lindepm} can be reduced roughly
  by half.

  Let
  $\Nd$ denote the
  largest integer less than or equal to $\frac{d}{2}$.
  Let 
\begin{equation}
 \label{eq:def-nus}
  \nus= \delta \sum_{j=0}^\Nd  g^j.
\end{equation}
  Notice that  $\nus$ is the dimension of the vector
  space $\cPNd$ and, given $(X,v)\in \hbd_p$,
  it is thus an upper bound for  the
  dimension of
$$
  \cMs =\{q(X)v: q\in \cPNd\}.
$$
  The following lemma is a variant of Lemma \ref{lem1},
  using  the smaller space  $\cMs$ instead of
  $\cM$.

\begin{lemma}
 \label{lem1alt}
   Suppose $p \in \cPddd$ satisfies the conditions of Assumption \ref{assume}
   and moreover that $p(0)=I_\dd$.
   If $(X,v) \in \hbd_p(n)$, \ then  $(P_{\cMs} X|_\cMs, v)
    \in \hbd_p(n)$; indeed,
    $t \PMs X|_{\cMs}\in\cD_p(n)$ for $0\le t<1$ and  $p(\PMs X|_{\cMs})v=0$.
\end{lemma}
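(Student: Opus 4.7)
The plan is to follow the template of Lemma~\ref{lem1}, upgrading its single bottleneck by exploiting the extra hypothesis $p(0)=I_\dd$. The first clause, that $tP_{\cMs}X|_{\cMs}\in\cD_p$ for $0\le t<1$, can be carried over verbatim: by Lemma~\ref{inhboundary}, $tX\in\cD_p$ for $0\le t<1$; Theorem~\ref{thm:convexities} shows $\cD_p$ is matrix convex; and the isometric inclusion $V:\cMs\hookrightarrow\mathbb R^n$ is a contraction, so $tP_{\cMs}X|_{\cMs}=V^T(tX)V\in\cD_p$.

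The substantive part is $p(Y)v=0$ for $Y:=P_{\cMs}X|_{\cMs}$. The direct identity $w(Y)v_j=P_{\cMs}w(X)v_j$ that powered Lemma~\ref{lem1} breaks down as soon as $|w|>\Nd$, since $\cMs$ is only invariant enough to absorb words of length at most $\Nd$ applied to the coordinates $v_j$. To bypass this I would invoke the signature-$(\dd,0)$ hypothesis $p(0)=I_\dd$: by the first clause plus continuity, $p(Y)\succeq 0$ and $p(X)\succeq 0$, so the vector equation $p(Y)v=0$ reduces to the scalar identity $\langle p(Y)v,v\rangle=0$.

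To prove the latter, write $p=\sum_w p_w\, w$ with $p_w\in\mathbb R^{\dd\times\dd}$ and $p_{w^T}=p_w^T$, and factor each word $w$ of length $k\le d$ occurring in $p$ as $w=u^T u'$ for scalar polynomials $u,u'\in\cPNd$ (possible because $\Nd$ is chosen so that $2\Nd\ge d$). Since the $X_i$'s are symmetric, $w(Z)=u(Z)^T u'(Z)$ for any tuple $Z$ of symmetric matrices. A short induction on word length shows $u(Y)v_j=u(X)v_j$ for every scalar $u\in\cPNd$: at each intermediate step of the evaluation the partial product belongs to $\cMs$ by the very definition of $\cMs$, so the projection in $Y_i=P_{\cMs}X_i|_{\cMs}$ acts as the identity. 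Hence, using the inclusion $\cMs\subset\mathbb R^n$,
\[
 \langle w(Y)v_i,v_j\rangle=\langle u'(Y)v_i,u(Y)v_j\rangle=\langle u'(X)v_i,u(X)v_j\rangle=\langle w(X)v_i,v_j\rangle.
\]
Summing against the scalar coefficients $(p_w)_{ji}$ gives $\langle p(Y)v,v\rangle=\langle p(X)v,v\rangle=0$, and combined with $p(Y)\succeq 0$ this forces $p(Y)v=0$.

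The main obstacle is precisely the non-invariance of $\cMs$ under $X$ for words longer than $\Nd$; the proof turns this into an asset by splitting each word symmetrically into two halves, each of which is legitimately captured by $\cMs$, and using positivity of $p$ on $\ocD_p$ to promote the resulting scalar equality into the vector equality $p(Y)v=0$. This is exactly the place where the extra hypothesis $p(0)=I_\dd$ (absent in Lemma~\ref{lem1}) earns its keep.
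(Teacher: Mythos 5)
Your overall strategy is exactly the paper's: establish $tY\in\cD_p$ for $Y=P_{\cMs}X|_{\cMs}$ by compression and matrix convexity, use $p(0)=I_\dd$ to get $p(Y)\succeq 0$, reduce the vector equation $p(Y)v=0$ to the scalar identity $\langle p(Y)v,v\rangle=\langle p(X)v,v\rangle=0$, and prove the latter by splitting each word so that both halves land in $\cMs$. The observation that $u(Y)v_\alpha=u(X)v_\alpha$ for $|u|\le\Nd$ (because every partial product stays in $\cMs$) is also correct and is the engine of the paper's computation.

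There is, however, one concrete step that fails: the factorization $w=u^Tu'$ with $u,u'\in\cP_{\Nd}$, justified by the parenthetical claim ``$2\Nd\ge d$.'' In this paper $\Nd=\lfloor d/2\rfloor$ (the stated formulas are $\Nd=d/2$ for $d$ even and $\Nd=(d-1)/2$ for $d$ odd, and Remark \ref{rlr2}'s degree count $2\Nd+1$ confirms this), so for odd $d$ one has $2\Nd=d-1<d$, and a word of length exactly $d$ cannot be written as a product of two words each of length at most $\Nd$. Your argument therefore does not cover the top-degree words when $d$ is odd. The paper's proof repairs precisely this point by using a \emph{three}-factor decomposition $w=w_1x_jw_2$ with $|w_1|,|w_2|\le\Nd$, which accommodates all words of length up to $2\Nd+1\ge d$; the middle letter $X_j$ is absorbed because
\[
  \langle P_{\cMs}X_jw_2(X)v_\alpha,\,w_1^T(X)v_\beta\rangle
   =\langle X_jw_2(X)v_\alpha,\,w_1^T(X)v_\beta\rangle,
\]
the projection being harmless since $w_1^T(X)v_\beta\in\cMs$. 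With that single modification (or, equivalently, with $\Nd$ replaced by $\lceil d/2\rceil$ throughout, which is not how the paper sets things up) your proof goes through.
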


\begin{proof}
   Just as in Lemma \ref{lem1}, for $0\le t<1$,
   we have    $tP_{\cMs} X|_{\cMs} \in \cD_p$.
  Since $p(0)=I_\dd$, it follows that
  $p(t\PMs X|_{\cMs})\succ 0$ and hence
  $p(\PMs X|_{\cMs})\succeq 0$.

   On the other hand,
   for any word $w$ of length at most $d$,  write
   $w= w_1x_jw_2$ where both words $w_1$ and $w_2$ have length at most
   $\Nd$.
   Write $v\in \mathbb R^\delta \otimes \mathbb R^n$
   as  $v=\sum_{\alpha=1}^\delta e_\alpha\otimes v_\alpha.$
    Since both $w_2(X)v_\alpha$ and $w_1^{\st}(X)v_\beta$ are
   in $\cMs,$ 
 \[
  \begin{split}
   \langle w(P_{\cMs} X|_{\cMs} ) v_\alpha,  \;  v_\beta \rangle
     = & \langle P_{\cMs}  X_j w_2(X)v_\alpha, \; w_1(X)^{\star}
      v_\beta \rangle  \\
     =& \langle X_j w_2(X)v_\alpha, w_1^{\st}(X)v_\beta \rangle \\
     =& \langle w(X) v_\alpha,v_\beta \rangle.
  \end{split}
 \]
  Consequently,
 \begin{equation*}
   \langle p(P_\cMs X|_\cMs )v,  \;  v\rangle
   =\langle p(X)v, \;  v\rangle =0.
 \end{equation*}
  Since also $p(\PMs X|_{\cMs})\succeq 0$,
  it follows that $p(P_\cMs X|_\cMs )v=0.$
\end{proof}

  Applying Lemma \ref{lem1alt} much like in the proof
  of Lemma \ref{cor-sharp-EW} produces the following. 

\begin{lemma}
 \label{cor-sharp-EW-alt}
    Suppose $p$ satisfies Assumption \ref{assume}
   and further that $p(0)=I_\dd$. 
    If $(X,v)\in\hbd_p(n)$, then there exists a monic  linear
    pencil $L$ of  size $\ell\le \nus$
   and a non-zero vector $w\in \RR^\ell\otimes \cMs$
   such that $L$ is positive definite  on
    $\cD_p$ and $L(X)w=0$. 
\end{lemma}

 Summarizing Lemma \ref{prop:quant} and
 combining Lemma  \ref{cor-sharp-EW-alt}
 with the argument behind lemma \ref{prop:quant}
 gives:

\begin{theorem}
 \label{thm:mainQuant}
 Suppose 
  $p$ is a symmetric $\dd\times\dd$ matrix-polynomial
 of degree  $d$ in $g$ variables which satisfies the conditions
 of Assumption \ref{assume}.
\begin{enumerate}[(i)]
\item
 There  
 is an $\ell \le \frac{\nu(\nu+1)}{2}$
 and $\ell \times \ell$ symmetric matrices
  $A_1,\dots,A_g$ such that
 $\cD_p=\cD_L$ where $L$
 is the monic  linear pencil
$
  L=I- \sum_j^{g} A_j x_j.
$
\item
  In the case that $p(0)=I_\dd$ the estimate on
  the size of the matrices $A_j$
  in  $L$ reduces  to $\frac{\nus(\nus+1)}{2}$,
  where $\nus= \dd \sum_0^\Nd g^j$.
\end{enumerate}
\end{theorem}

\subsection{Further Remarks}
\label{subsec:other}
\begin{remark}
\rm
 \label{anticipate}
   We anticipate that  the results  of this paper remain valid
   if symmetric free variables \index{symmetric variables}
   are replaced by free free variables. \index{free variables}
 \index{free variables}
   That is, with variables  $(x_1,\dots,x_g,y_1,\dots,y_g)$
   with the involution ${}^{\st}$ on polynomials
   determined by $x_j^{\st}=y_j$, $y_j^{\st}=x_j$, and, for polynomials
    $f$ and $g$
   in these variables, $(fg)^{\st}=g^{\st}f^{\st}.$ These polynomials
    are evaluated at tuples $X=(X_1,\dots,X_g)\in M_n(\mathbb R^g)$
   of $n\times n$ matrices with real entries.
   We do not see an obstruction to the free free variable 
   analog of Theorem \ref{thm:main}
   using the arguments here.  Indeed  arguments for 
   such variables are
    often easier than for symmetric variables.
\qed \end{remark}

\begin{remark}\rm
 \label{rem:need-semi-algebraic}
   Fix a positive integer $\mu$
   and let $\mathcal L$ denote a collection
  of monic linear pencils of size at most $\mu$.
   The matrix convex set $\mathcal C=\mathcal C(n)$ 
   defined by 
\[
  \mathcal C(n)=\{X\in\smatng: L(X) \succ 0 \mbox{ for all } L\in\mathcal L\},
\]
  has the following finiteness property.  If $X\in\smatng,$ then 
  $X\in\mathcal C(n)$ if and only if for every subspace 
  $\mathcal M$ of $\mathbb R^n$ of dimension $k\le \mu$,
  the tuple $P_{\cM}X|_{\cM}\in\mathcal C(k)$. On the other hand,
  this latter property does not suffice to guarantee 
  that $\mathcal L$ can be replaced by a finite collection
  of monic linear pencils.  Thus, some additional hypothesis,
  such as assuming 
 $\cD_p$ is determined by a polynomial,  is essential 
  to reach the conclusion of  Theorem 
  \ref{thm:main}.
\end{remark}

\section{The Case of Irreducible $p$}
 \label{sec:irreducible}
 The main result of this section,
   Theorem \ref{thm1} below,  says if $p$ satisfies 
  Assumption \ref{assume},  
  $p(0)=I_\dd$, and  $p$ is irreducible in a sense made
  precise below,  then $p$ has degree at most two.
 Moreover, under these
  assumptions and with $p$ scalar-valued ($\delta=1$), 
  Corollary \ref{thm:degreetwo}
  exhibits a very close connection
  between $p$ and an $L$ satisfying the conclusion of Theorem \ref{thm:main}.
  Recall, $p$ is a symmetric $\dddd$-matrix valued polynomial
  of degree $d$ in $g$ freely non-commuting variables.

  \begin{lemma}
 \label{lem2}
  Suppose $p \in \cPddd$
  satisfies the conditions of Assumption \ref{assume}.
  Suppose further that $p(0)=I_\dd$.
   If
 \begin{enumerate}[(i)]
  \item  $(X,v)\in \hbd_p(n);$ 
  \item  $L$ is  a monic  linear pencil  of size $\ell$ which
   is positive definite  on each $\cD_p(n)$; and
  \item there is a vector
   $0\ne w\in \ \RR^\ell\otimes \cMs,$
    where
   \[
    \cMs  =\{q(X)v: q\in \cPNd \},
   \]
    such that
    $L(X)w=0,$
 \end{enumerate}
  then there exists a non-zero $q\in\cPNpd$
  such that $q(X)v=0.$
     (Note: it is not  assumed that $L$ is the
   ``master monic  linear pencil''  from Theorem \ref{thmL}.)
\end{lemma}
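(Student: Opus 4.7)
The plan is to produce the desired polynomial $q$ by ``peeling one more variable'' off of $w$, extracting it directly from the equation $L(X)w = 0$. First, because $w \in \CCC^\ell \otimes \cMs$ is non-zero, I would choose row-vector polynomials $q_1,\dots,q_\ell \in \cPNd$, not all zero, with
\[
 w = \sum_{\alpha=1}^\ell e_\alpha \otimes q_\alpha(X)v,
\]
and write $L = I + \sum_j A_j x_j$ with symmetric $A_j \in M_\ell(\RR)$. Reading off $L(X)w = 0$ in each $e_\beta$-coordinate of $\CCC^\ell$ then yields, for every $\beta$,
\[
 q_\beta(X)v + \sum_{j,\alpha} (A_j)_{\beta\alpha} X_j\, q_\alpha(X)v = 0.
\]

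This suggests defining the candidate polynomials
\[
 r_\beta(x) := q_\beta(x) + \sum_{j,\alpha} (A_j)_{\beta\alpha}\, x_j\, q_\alpha(x),
\]
which by construction lie in $\cPNpd$ (each $q_\alpha$ has degree at most $\Nd$ and we multiply by at most one extra variable) and satisfy $r_\beta(X)v = 0$. Hence it will suffice to exhibit a single $\beta$ for which $r_\beta \not\equiv 0$ as a formal polynomial; that $r_\beta$ will be the desired $q$.

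The real content of the lemma is thus the nonvanishing claim, which I would prove by contradiction. Assembling the rows $q_1,\dots,q_\ell$ into an $\ell \times \dd$ polynomial matrix $Q(x)$, a short bookkeeping check shows that ``every $r_\beta$ is identically zero'' is equivalent to the formal matrix identity $L(x)\,Q(x) = 0$ in the nc polynomial ring. Since $L(0) = I$, comparing homogeneous parts of $LQ = 0$ degree by degree gives $Q^{(0)} = 0$ and $Q^{(k)} = -\sum_j x_j A_j\, Q^{(k-1)}$ for $k \ge 1$, so by induction $Q \equiv 0$. But then every $q_\alpha = 0$ and hence $w = 0$, contradicting $w \neq 0$. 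The only mildly delicate point is the bookkeeping that identifies ``$r_\beta \equiv 0$ for all $\beta$'' with $LQ = 0$; once that is in place, the degree-by-degree induction essentially reflects the fact that $L(x)$ is invertible in the formal power series matrix ring $M_\ell(\cP[[x]])$ because $L(0) = I$.
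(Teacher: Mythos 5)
Your proposal is correct and follows essentially the same route as the paper's proof: read off the coordinates of $L(X)w=0$ to form the candidate polynomials $r_\beta = q_\beta + \sum_{j,\alpha}(A_j)_{\beta\alpha}x_j q_\alpha \in \cPNpd$, then rule out their simultaneous formal vanishing by a degree-by-degree induction driven by $L(0)=I$. Your packaging of that induction as invertibility of $L$ over formal power series (versus the paper's row-by-row ``no constant term, then no linear term, \dots'' argument) is only a cosmetic difference.
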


\begin{proof}
  Write the monic  linear pencil $L$ as
 \[
   L=I+\sum A_j x_j,
 \]
  where the $A_j$ are $\ell\times \ell$ symmetric matrices.
  The tuple $X$ acts on $\mathbb R^n$ and
  hence $A_j\otimes X$ acts upon $\mathbb R^\ell\otimes \mathbb R^n$.
  With respect to this tensor product decomposition $w=\sum e_j \otimes h_j,$
  where $\{e_1,\dots,e_\ell\}$ is the standard orthonormal
  basis for $\mathbb R^\ell$ and $h_j\in\cMs$.
  From the definition of $\cMs$,
  there exists polynomials $r_j\in \cPNd$
  such that $h_j=r_j(X)v$.

  Since $L(X)w=0$, for each $m$  we have
  $0= [ e_m^{\star} \otimes I]  L(X)w.$ Thus,
 \begin{equation*}
  \begin{split}
    0   = &[ e_m^{\star} \otimes I ] [ w +\sum_k
                            \sum_j A_k e_j\otimes X_k r_j(X)v]\\
       = & [r_m + \sum_{k,j} (e_m^{\star}A_ke_j) x_k r_j](X)v.
  \end{split}
 \end{equation*}
   Now we argue, by contradiction,
   that the elements $q_m$ of $\cPNpd$ given by
 \begin{equation*}
   q_m(x) =  r_m(x) + \sum_{k,j} (e_m^{\star}A_ke_j) x_k r_j(x)
 \end{equation*}
   are not all $0$. If they were all $0$, then each
   $r_m$ satisfies $r_m(0)=0$;  that is, $r_m$ has no constant
   term. But, then, by the same reasoning, each
   $r_m$ has no linear terms and continuing along these lines
    we ultimately conclude  that all the $r_m$ are $0$.
   On the other hand, since $w\neq 0$, there is an
   $m$ such that $h_m=r_m(X)v\ne 0,$ a contradiction.
   Thus, 
   there is an $m$ such that $q_m\ne 0$ and at the
   same time $q_m(X)v=0$.  To complete the proof,
   observe that the degree of this $q_m$ is at most
   $\Nd+1$.
\end{proof}

\begin{remark}\rm
 \label{rlr1}
    Let $R$  denote the 
    element of $ \cP^{\ell\times \delta}$
     whose $m$-th row is the $r_m$
      produced
    in the proof of Lemma \ref{lem2}.
    The lemma
    says that $R$ is not zero. On the other hand,
    $R(X)v=w$ and $L(X)R(X)v=L(X)w=0.$  Hence
    the symmetric polynomial $R^{\star} L R$ is non-zero, but vanishes
    at $(X,v)$.
\qed \end{remark}

The polynomial $p$
 \index{minimum degree irreducible}
  \index{minimum degree defining polynomial}
 is a
 {\bf minimum degree irreducible}, or
 {\bf a minimum degree defining polynomial for $\cD_p$}, 
  provided the only $q\in \cP^\dd_{d-1}$ which satisfies
  $q(X)v=0$ for every $n$ and every $(X,v)\in\hbd_p(n)$ is $q=0$. 
  Note  that,
  while $p$ is restricted by Assumption \ref{assume}
  to be symmetric, the polynomial entries of $q$ need not be symmetric.
  Of course,  $q(X)$ is not symmetric (whenever $\delta>1$), 
  but rather an operator
  from $\mathbb R^\delta \otimes \mathbb R^n$ to $\mathbb R^n$.  


\begin{theorem}
 \label{thm1}
   If the polynomial $p \in \cPddd$
   satisfies Assumption \ref{assume} and if also $p(0)=I_\dd$,
   then there exists a non-zero  $q\in \cPNpd$
   such that $q(X)v=0$ for every $n$ and $(X,v)\in\hbd_p(n)$.

   In particular, if the graded set $\cD_p=(\cD_p(n))$ 
    is bounded and convex, 
   if  $p(0)=I_\dd,$ and if $p$ is
   a minimum degree defining polynomial
   for $\cD_p$, then the degree of $p$ is at most two.
\end{theorem}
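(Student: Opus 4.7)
The plan is to reduce the required uniform vanishing to the pointwise construction of Lemma \ref{lem2} by passing to a single \emph{dominating} point of $\hbd_p$. Since $\hbd_p$ respects direct sums (as noted in the discussion preceding Lemma \ref{lem:yesdomin}: if $(X,v),(Y,u)\in\hbd_p$, then Lemma \ref{inhboundary} gives $tX,tY\in\cD_p$ for $0\le t<1$; direct-sum closure of $\cD_p$ and the identity $p(X\oplus Y)(v\oplus u)=p(X)v\oplus p(Y)u=0$ place $(X\oplus Y,v\oplus u)$ in $\hbd_p$), Lemma \ref{lem:yesdomin} applied to $S=\hbd_p$ yields a pair $(X_0,v_0)\in\hbd_p$ with $\cI(\hbd_p)=\cI(X_0,v_0)$. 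Equivalently, $(X_0,v_0)$ is dominating, so any $q\in\cPNsigd$ satisfying $q(X_0)v_0=0$ automatically vanishes at every pair in $\hbd_p$.

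Next, since the theorem hypothesizes $p(0)=I_\dd$, Proposition \ref{cor-sharp-EW-alt} applies at $(X_0,v_0)$ and produces a monic affine linear pencil $L$ of size $\ell\le\nus$ that is positive definite on $\cD_p$, together with a non-zero vector $w\in\CCC^\ell\otimes\cMs$ such that $L(X_0)w=0$, where $\cMs=\{q(X_0)v_0:q\in\cPNd\}$. This is exactly the input data required by Lemma \ref{lem2}, whose conclusion then furnishes a non-zero $q\in\cPNpd$ with $q(X_0)v_0=0$. Because $\Nd+1\le d$ we have $\cPNpd\subseteq\cPNsigd$, so the dominating property of $(X_0,v_0)$ propagates and $q$ vanishes on all of $\hbd_p$, giving the first conclusion.

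For the second assertion, suppose $p$ is a minimum degree defining polynomial for $\cD_p$. Then no non-zero vector of polynomials of degree strictly less than $d$ can vanish on $\hbd_p$; since the $q$ just constructed is non-zero with $\deg q\le \Nd+1$, we must have $\Nd+1\ge d$. Using $\Nd=\lfloor d/2\rfloor$, this inequality forces $d\le 2$. The only conceptual move in the argument is the dominating-point reduction that converts the pointwise Lemma \ref{lem2} into a single universal witness; everything after is a direct assembly of Proposition \ref{cor-sharp-EW-alt} and Lemma \ref{lem2} evaluated at the chosen dominating point, followed by an elementary degree comparison.
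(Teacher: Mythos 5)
Your proof is correct, and it reaches the conclusion by a slightly different route than the paper. The paper works directly with the subspaces $C_{(X,v)}=\{q\in\cPNpd: q(X)v=0\}$: it shows each is non-trivial via Proposition \ref{cor-sharp-EW-alt} and Lemma \ref{lem2}, uses direct sums to show finite intersections remain non-trivial, and then picks a $C_{(Y,w)}$ of minimal dimension, any non-zero element of which must vanish on all of $\hbd_p$. You instead factor this globalization step through the already-established dominating-point machinery: Lemma \ref{lem:yesdomin} applied to $S=\hbd_p$ (which respects direct sums, as you verify) hands you a single pair $(X_0,v_0)$ with $\cI(\hbd_p)=\cI(X_0,v_0)$, and then one application of Proposition \ref{cor-sharp-EW-alt} and Lemma \ref{lem2} at that point produces the universal $q$. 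The two arguments are the same finite-dimensionality-plus-direct-sums mechanism; the paper simply re-runs it inside $\cPNpd$, while you reuse Lemma \ref{lem:yesdomin}, whose proof is that very mechanism carried out in $\cPNsigd$. Your version is arguably more economical, but it does rely on the containment $\cPNpd\subseteq\cPNsigd$ (i.e.\ $\Nd+1\le d$, valid for $d\ge 1$, which Assumption \ref{assume} guarantees since boundedness forces $d\ge1$) so that the degree-$(\Nd+1)$ polynomial produced by Lemma \ref{lem2} actually lies in the space for which $(X_0,v_0)$ is dominating — you correctly flag this, and it is the one point where the two proofs could have diverged. The degree comparison for the second assertion matches the paper's.
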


\begin{proof}
   Given $(X,v)\in \hbd_p(n)$, let
 \begin{equation*}
   C_{(X,v)}=\{q \in \cPNpd : \ \ q(X)v=0 \}.
 \end{equation*}
   Note that $C_{(X,v)}$ is a subspace of $\cPNpd$.

   Let $\cMs=\{r(X)v: r\in\cPNd\}$.
   By Proposition \ref{cor-sharp-EW-alt} there is
   a monic  linear pencil $L$ of some size
   $\ell\le \nus$ ($\nus$ is defined 
   in Equation \eqref{eq:def-nus})
   such that $L$ is positive definite
   on $\cD_p$ and a non-zero vector
   $w\in \mathbb R^\ell\otimes \cMs$
   such that $L(X)w=0$.
   Thus Lemma \ref{lem2} applies to produce a non-zero
   $q\in\cPNpd$ such that $q(X)v=0$.
   Hence $C_{(X,v)}$ is non-trivial (not $(0)$).

   Given a positive integer $s$
   and  $(X^j,v^j) \in \hbd_p(m_j)$ for $1\le j\le s$, 
   let $(W,u)=\oplus (X^j,v^j)$.  Then $(W,u)\in\hbd_p(m)$,
   where $m=\sum m_j$. Further, 
   by what has already been proved,
   there exists
   a non-zero $q\in \cPNpd$
   such that $q(W)u=0$. But then $q(X^j)v^j=0$ for each $j$.
   Hence $q\in \cap_{j=1}^{\ell} C_{(X^j,v^j)}.$
   It follows that the collection of subspaces
   $C_{(X,v)}$ is closed with respect to finite
   intersections.
   Since also each $C_{(X,v)}$ is a non-trivial subspace of
   the finite dimensional space $\cPNpd$,
   there is a smallest (and non-trivial) subspace $C_{(Y,w)}$
   uniquely determined by the condition that
   it has minimum dimension. Note that any (non-zero)
   $q\in C_{(Y,w)}$ must vanish on all of $\hbd_p$,
   since if $(X,v)\in \hbd_p$ and $q(X)v\ne 0$, then
   $C_{(X,v)}\cap C_{(Y,w)} \subsetneq C_{(Y,w)}$.

   The second part of the theorem follows immediately from
   the first part and the definition of minimum degree defining
   polynomial.
\end{proof}

\begin{corollary}
 \label{thm:degreetwo}
   Suppose $p \in \cPddd$ satisfies the conditions
   of Assumption \ref{assume},    $p(0)=I_\dd$, and 
    $p$ is a minimum degree defining polynomial for $\cD_p$.
%
   If $\dd=1$, there exists a $1\times 1$
   monic  linear pencil $L_0,$ an integer
   $m\le g$  and an $m\times 1$   linear
   pencil ${\hat L}$ with ${\hat L}(0) =0$
   such that $\cD_p=\cD_L$, where
 \[
    L=\begin{pmatrix} I_m & \hL \\ \hL^{\st} & L_0
        \end{pmatrix}.
 \]
  In fact, $p$ is the Schur complement of the $(1,1)$
  entry of $L$; i.e.,
 \[
   p = L_0 -\hL^{\st} \hL.
 \]
\end{corollary}

 This corollary of Theorem \ref{thm1}
 is, for the most part, an improvement over the  main result of \cite{DHM07}.
 In particular, the result here
 removes numerous hypotheses found in \cite{DHM07} while reaching
 a stronger  conclusion, though here it is assumed that
 $\cD_p$ is convex, rather than the
 weaker condition that $\ocD_p $ is convex.
 The techniques here are completely different than
 those in \cite{DHM07}.

\begin{proof}
  The first part of Corollary \ref{thm:degreetwo} is covered
  by Theorem \ref{thm1}.  It remains to prove if
  $p$ is a symmetric  free polynomial in $\cP^{1 \times 1}_2$,
  if $p(0)=1$  and if $\cD_p$ is both bounded and convex,
  then $p$ has the form
 \[
   p= 1 +\ell(x) -\sum_{j=1}^g \lambda_j(x)^2,
 \]
   where $\ell$ and each $\lambda_j$ are linear.

   Since $p$ has degree two and is symmetric, there is a
   uniquely determined symmetric $g\times g$ matrix $\Lambda$ such that
 \[
   p(x)=1 +\ell(x) -  \langle \Lambda x,x\rangle,
 \]
   where $x$ is the vector with entries $x_j$.
   If $\Lambda$ is not positive semi-definite,
   then there is a $t\in\mathbb R^g$ such that
   $ \langle \Lambda t,t\rangle  <0$ and hence,
   for $s\in\mathbb R$,
 \[
   p(st)= 1 + s\ell(t) - s^2 \langle \Lambda t,t \rangle
 \]
   is either positive for all $s\ge 0$ or is
   positive for all $s\le 0$ depending upon the
  sign of $\ell(t)$.  In either case,
  $\cD_p(1)$ is not bounded.  Thus the boundedness of $\cD_p$ implies
   that $\Lambda$ is positive semi-definite.
   Hence there is an $0\le m\le g$
  and an  orthogonal
   set of vectors $u_1,\dots,u_g$ such that
 \[
   \Lambda = \sum_1^m u_\ell u_\ell^{\star}.
 \]
   Letting $\lambda_\ell = \sum_j (u_\ell)_j x_j$,
 \[
   \hL=\begin{pmatrix} \lambda_1 \\ \vdots \\ \lambda_m\end{pmatrix},
 \]
  and $L_0=1+\ell$ the conclusion of the corollary
  follows. 
\end{proof}

   The following example shows that
   Corollary \ref{thm:degreetwo} requires the irreducibility hypothesis.
   Here we work with two variables $(x,y)$. Let
   $b(x,y)=1-x^2-y^2$ and $f(x,y)=1-(x-\frac14)^2-y^2$.
   The set
 \[
   \cD=\cD_{b\oplus f}=\{(X,Y): b(X,Y) \succ 0,\ f(X,Y)\succ 0\}
 \]
   is convex.
   Let  $p_1 = fbf$ and $p_2=bfb$.  Then
  $\cD_{p_1}=\cD=\cD_{p_2}$.  Hence,
  neither $p_1$ nor $p_2$ is a minimum degree
  defining polynomial for $\cD$. Indeed, 
  $bf$ vanishes on $\hbd_{p_1}$ and $fb$
  on $\hbd_{p_2}$.  On the other hand,
  neither $bf$ nor $fb$ is a symmetric so neither
  is a candidate for a minimum degree defining
  polynomial.  It is likely that in this
  example there does not exist a minimum degree
  defining polynomial for $\cD$.

\section{Free Real Algebraic Geometry}
 \label{sec:freesag}
     One of the main branches of real algebraic geometry,
    dating back to Hilbert, 
    is semi-algebraic geometry, a  subject which
deals with polynomial inequalities.
 Free (non-commutative) semi-algebraic
geometry has been developing for about a decade.

 This section describes implications of
  the LMI representation of 
 Theorem \ref{thm:main} for 
 free semi-algebraic geometry.
 It also contains a strengthening of 
Theorem \ref{thm:main}.
Another area of contact is semi-definite programming (SDP),
one of the main developments  in optimization
over the last two decades.   We state one
(disturbing) result
in the language of SDP in  \S \ref{sec:persp}.
   
\subsection{Free Semi-Algebraic Sets}

This subsection gives definitions of free semi-algebraic sets 
and their principal components.
 Recall, from  Subsection \ref{subsec:semi-algebraic}, that
  $pc[\cW]$ denotes
  the \df{principal component} of a graded set
  $\cW\subset \smatg$.  Also, if $p$ is a matrix-valued
  symmetric polynomial and $p(0)$ is invertible, then 
  $pc[\pos] =\cD_p.$

\begin{lemma}
 \label{lem:capbasic}
   If $p_j \in \cP^{\dd_j \times \dd_j}$ 
   is symmetric and $p_j(0)$ is  invertible for $j=1,2,\dots,s,$ then
  \beq
\label{eq:capLots}
   \cap_1^s \posI_{p_j} =\pos
   \qquad  and \qquad 
   \cap_1^s \cD_{p_j} \supset \cD_p, 
\eeq
  where $p=\oplus p_j$. 
 Further
\beq
\label{eq:pcDp}
pc[  \cap_1^s \cD_{p_j}] = \cD_p.
\eeq
\end{lemma}

\begin{proof}
  The intersection property of $\posI$  is obvious, 
  as is the inclusion, 
  $\pos \subset \posI_{p_j},$ for each $j$. 
   Hence $\cD_p= pc[\pos] \subset pc[ \posI_{p_j}]=\cD_{p_j}$,
   so
   $\cap_1^s \cD_{p_j} \supset \cD_p$ and 
   $$
   pc[\cap_1^s \cD_{p_j}] \supset \cD_p.
   $$
  Since $\cD_{p_j} \subset \posI_{p_j}$, we have
   $\cap_1^s \cD_{p_j} \subset \cap_1^s \posI_{p_j}
    = \pos$. 
  Consequently, 
 \[
   pc[\cap_1^s \cD_{p_j}] \subset pc[\cap_1^s \posI_{p_j}]
    = pc[\pos]=\cD_p. 
 \]
\end{proof}

\def\cpdj{{ \cP^{\delta_j \times \delta_j} } }
 Classically, a basic open semi-algebraic set is a set of the form
\[
  S=\{x\in\mathbb R^g : \ttq_j(x)>0, \ \  j=1,\dots,\sigma\},
\]
  for given (commutative) polynomials $\ttq_j$  \cite{BCR98}. 
 There are several natural ways to extend this definition to free 
  $*$-algebras.
 The one which follows has the property that theorems
 flowing from it are stronger than analogous theorems
 using other definitions.
 Paralleling classical real algebraic geometry, we define a
 {\bf  \ncbo  semi-algebraic set} (containing 0)
  to be  a graded
  set of the form $\cap_j \cD_{p_j}$
  for some finite set of symmetric matrix polynomials $p_j$ in $\cpdj$ with
  $p_j(0)$ invertible. 
  Note, while each $\cD_{p_j}$ is a connected set, the
  intersection need not be. 
 A \df{free open semi-algebraic set}  (containing 0)
 is a finite union of  \ncbo  semi-algebraic sets. 
 
    In  classical real algebraic geometry, 
   the components of a semi-algebraic
   set are themselves  semi-algebraic.
   Lemma \ref{lem:capbasic} says that the component
  of $0$ of a \ncbo semi-algebraic set is again
  \ncbo semi-algebraic; and 
   corollary \ref{cor:morebasic-rad}
   gives natural  conditions under which 
   the principal component of a 
   free open  semi-algebraic set is itself
   a \ncbo  semi-algebraic set.

 This section develops some properties of free open semi-algebraic sets, 
 several of which contrast markedly with the classical
 situation. These properties lead to a strengthening of 
  Theorem \ref{thm:main} 
 and they are used to show that if the projection of an LMI
 representable set  is a free open semi-algebraic set, then it
  is in fact LMI representable.  

\subsection{Connectedness}
   Before turning to free semi-algebraic
  sets, this subsection derives some fairly general facts with
  the theme of connectedness.

\begin{proposition}
 \label{prop:basic-rad}
  Suppose  $p_j\in\cP^{\dd_j\times \dd_j}$
  for $j=1,2,\dots,s,$ each $p_j$ is symmetric,  and each 
  $p_j(0)$ is  invertible.  
  Further, suppose  $\mathcal W=(\mathcal W(n))$
  is a graded set with $\mathcal W(n) \subset \cup_{j=1}^s  \posI_{p_j}(n)$
  for each $n;$
   that is, $\mathcal W\subset \cup_1^s \posI_{p_j}$. 
  If $\mathcal W$ 
  respects direct sums and each $\mathcal W(n)$
  contains $0$ and is open and connected,
  then there is a $k$
  such that $\mathcal W\subset \cD_{p_k}$;  that is,
  $\mathcal W(n)\subset \cD_{p_k}(n)$ for each $n$. 
\end{proposition}

\begin{proof}  We begin by proving, if $X\in\mathcal W(n)$
  and if $X(t)$ is
  a (continuous) path  for $0\le t\le 1$ such that
  $X(0)=0,$ $X(1)=X,$ and $X(t)$ lies in $\mathcal W(n)$,
  then there is a $j$
  such that $p_j(X(t))$ is invertible for every $0\le t\le 1$.

  Arguing by contradiction,  suppose no such $j$ exists.
  Then for every
  $1\le \ell \le N$ 
  there exists a $0\le t_\ell \le 1$ such that $p_\ell(X(t_\ell))$
  is not invertible.  
  Since $\mathcal W$
  is closed with respect to direct sums, 
  $Z=\oplus X(t_\ell) \in\mathcal W(nN)$.
  It follows that there is some $1\le j\le N$ such that
  $Z\in\posI_{p_j}(nN)$ and in particular, 
  $p_j(Z)$ is invertible, contradicting
  $p_j(X(t_j))$ not invertible.  We conclude that there is
  some $j$ such that  $p_j(X(t))$ is invertible for $0\le t \le 1$
  and hence $X(t)\in\cD_{p_j}$ for all $0\le t\le 1$.

  Now suppose there is an $m$ and a  $Y \in\mathcal W(m)$ such that
  $ Y\notin \posI_{p_s}(m)$.
  In particular, $p_s(Y)$ is not invertible.
  Since $Y$ is in $\mathcal W(m)$, there is a continuous
  path $Y(t)\in\mathcal W(m)$ such that $Y(0)=0$ and $Y(1)=Y.$
  Now let $n$ and $X\in\mathcal W(n)$ be given.  There is a continuous
  path $X(t)\in\mathcal W(n)$ with $X(0)=0$ and $X(1)=X$.
  Let $Z(t)=X(t)\oplus Y(t)$;
  which is in $\cW(n+m)$ since $\cW$ respects direct sums.
  Thus
  $Z(t)\in\mathcal W(n+m)$ is a continuous path ($0\le t\le 1$)
  with $Z(0)=0$. From what has already been proved,
  there is a $j$ such that $p_j(Z(t))$ is invertible for
  each $0\le t\le 1$. Thus $p_j(Y)$ is invertible
  and we conclude that $j\ne s.$
  At the same
  time $p_j(X(t))$ is invertible for $0\le t\le 1$
  and thus $X\in\cD_{p_j}$.
   Hence $X\in \cup_{1}^{s-1}\cD_{p_j}(n)$.
  We have proved: either $\mathcal W(m)\subset \posI_{p_s}(m)$
  for every $m$,  or
  $\mathcal W(n)
\subset \cup_{1}^{s-1} \cD_{p_j}(n)
\subset \cup_{1}^{s-1} \posI_{p_j}(n)$ for every $n$.
Since $\cW$ is connected and contains $0$,
the first alternative becomes $\cW$ is a subset of $\cD_{p_s}$;
   that is,  $\cW(m)\subset \cD_{p_s}(m)$ for every $m$. 
Induction
  now finishes the proof.
\end{proof}

\begin{corollary}
 \label{cor:morebasic-rad}
  Suppose $p_{k,j}$ is a finite
  collection ($k=1,\dots,t; \  j=1,\dots,s_k$) 
  of symmetric matrix-valued 
  free polynomials with $p_{k,j}(0)$
  invertible. 
  Suppose  the graded set $\mathcal W=(\mathcal W(n))$
  has the form
   \begin{enumerate}[(i)]
   \item
   \label{eq:WisD}
 $$  \cW = pc[ \cup_{k=1}^t \cap_{j=1}^{s_k} \cD_{p_{k,j}} ],$$
   or 
\item
\label{eq:WisI}
$$\cW = pc[\cup_{k=1}^t \cap_{j=1}^{s_k} \posI_{p_{k,j}}].  $$
\end{enumerate}
 If $\cW$  
  respects direct sums, 
  then there is a $k_0$ such that $\cW =\cD_{p^{k_0}}$,
  where $p^k$ is defined by
  $$p^{k}=\oplus_{j=1}^{s_k} p_{k,j}.$$ 
  \end{corollary}

\begin{proof}

Either of the hypotheses 
\eqref{eq:WisD} or \eqref{eq:WisI}
imply that 
$$
\cW \subset  \cup_{k=1}^t \cap_{j=1}^{s_k} \posI_{p_{k,j}}=
  \cup_{k=1}^t \posI_{p^k},
  $$
the equality holding because of Lemma \ref{lem:capbasic}.
 Proposition \ref{prop:basic-rad} implies
  there is a $k_0$ such that
  $ \cD_{p^{k_0}} \supset \cW$.
  Because of this containment,
  hypothesis \eqref{eq:WisD} and Lemma \ref{lem:capbasic}, we have
   $$
    \cD_{p^{k_0}}\supset
   \cW = pc[\cup_{k=1}^t \cap_{j=1}^{s_k} \cD_{p_{k,j}}]
   \supset
  pc[ \cup_{k=1}^t  \cD_{p^{k}}] \supset \cD_{p^{k_0}}.
   $$
   Thus
  $\cW=  \cD_{p^{k_0}}$.
  Hypothesis \eqref{eq:WisI} along with 
  $\posI_p \supset \cD_p$ and 
  $ \cD_{p^{k_0}} \supset \cW$ imply
    hypothesis \eqref{eq:WisD} holds.
    So, under either hypothesis, the required conclusion follows. 
    \end{proof}

\subsection{Free Semi-Algebraic Sets vs. Basic Ones}  
  Corollary \ref{cor:morebasic-rad} \eqref{eq:WisD} 
  rephrased in terms of semi-algebraic sets
  gives the following result. 

  \begin{corollary}
\label{prop:basic1}
  Let $\mathcal W=(\mathcal W(n))$ 
  be a graded set which is 
  contained in (resp. is the principal component of)
    a free open  semi-algebraic set. 
  If $\mathcal W$ 
  respects direct sums and each $\mathcal W(n)$
  contains $0$ and is open and connected,
  then $\cW$ is contained in (resp. equals) 
  the principal component of 
   some  \ncbo semi-algebraic set.  
\end{corollary}

  Theorem \ref{thm:main} can now be strengthened
  as follows. 
  
\begin{theorem}
 \label{cor:main-cor}
 Suppose the graded set 
 $\mathcal W$   is bounded and matrix convex.
 \begin{enumerate}[(i)]
 \item
 \label{it:fob}
   If $\mathcal W$ is the principal component of 
   a  free open semi-algebraic set,  \ or
 \item
 \label{it:fobI}
   if $\cW$ is the principal component of 
   a graded set of the form
$$\mathcal  \cup_{k=1}^t \cap_{j=1}^{s_k} \posI_{p_{k,j}}, $$
\end{enumerate}
then $\cW$ has an LMI representation.
\end{theorem}

\begin{proof}
   Because $\cW$ is matrix convex, it is closed with respect
   to direct sums.  Thus, under either hypothesis 
   \eqref{it:fob} or \eqref{it:fobI} , Corollary 
  \ref{cor:morebasic-rad} implies that $\cW$
   has the form $\cD_p$ for some symmetric matrix-valued $p$.
   Further, $\cD_p$ is convex and thus Theorem \ref{thm:main}
    implies that $\cD_p$ has an LMI representation.
\end{proof}

   This theorem  implies that
   the principal component of a free open semi-algebraic set
   is itself free semi-algebraic under the additional hypothesis
   that it is matrix convex.

\subsection{Free Projections}
 \label{subsec:proj}
  One of the  key facts in real algebraic geometry is that 
  the projection of a semi-algebraic set is
  again semi-algebraic (by Tarski's principle).  
  Thus, if $S\subset \mathbb R^{g+h}$
  is an open semi-algebraic set, then  
  the projection onto its first $g$ coordinates is a semi-algebraic set. 
   Given a graded subset $\cD=(\cD(n))$ of
  the graded set $\mathbb S(\mathbb R^{g+h})$,
  the \df{(free) projection} \index{projection, free}
  of $\cD$ (onto $(\smatng)$) 
  is the graded set $\pi(\cD)=(\pi(\cD)(n))$ defined by
 \[
   \pi(\cD(n)) = \{ X\in\mathbb S_n(\mathbb R^g):
       \mbox{ there is a } Y\in\mathbb S_n(\mathbb R^h)
      \mbox{ such that } (X,Y)\in\cD(n) \}.
 \]

\begin{lemma}
 \label{freeprojections}
   The following properties are inherited under free projections. 
\begin{enumerate}[(i)]
 \item respects  direct sums;
 \item respects unitary conjugation;
 \item openness;
 \item connectedness;
 \item boundedness;  and  
 \item matrix convexity.
\end{enumerate}
\end{lemma}

\begin{proof}
 Straightforward.
\end{proof}

An immediate consequence of
 combining  this lemma with  Theorem \ref{cor:main-cor}
 \eqref{it:fob} is a fact which is far from what one finds
in the classical commutative case.   
  
\begin{corollary}
 \label{cor:semi-project} 
 If the graded subset $\cW$ of $\mathbb S(\mathbb R^{g+h})$
 is bounded and 
has an LMI representation and if its projection $\pi(\cW)$ 
   is  a free open semi-algebraic set,
  then  
 $\pi(\cW)$  has an LMI representation.
 
\end{corollary}

 This corollary plus the example in the following subsection shows that
 the projection of a free bounded basic open semi-algebraic
 set need not be free  open semi-algebraic. 
We state this as a proposition, since it is so contrary to a basic tenet of
classical real algebraic geometry.

\begin{proposition}
 \label{prop:nofreeTarski}
  There exists a monic linear pencil $L$ in $g+h$ variables
  such that the projection $\pi(\cD_L)$ is neither of the form 
  \eqref{it:fob}  nor \eqref{it:fobI}
  in Theorem \ref{cor:main-cor}.
   In particular,
   there exist convex  \ncbo  semi-algebraic sets with 
   projections which are not  free open semi-algebraic. 
\end{proposition}

 To establish the proposition, it suffices 
 (thanks to Corollary \ref{cor:semi-project},) to 
 produce a monic linear pencil $L$ in $g+h$ 
 variables with the property that $\pi(\cD_L)$
 is not of the form $\cD_M$ for a monic linear
 pencil $M$ in $g$ variables.  
  The following is an example of such an $L$.

\subsection{The TV Screen - an Example}
 \label{subsec:example}

 \rm
  Consider the set
 \[
   S=\{(x_1,x_2)\in\mathbb R^2 : 1-x_1^4-x_2^4 >0\},
 \]
 often called the TV screen.
  This set is evidently convex.
  By the  line test in
  \cite{HV07}, there does not exist a monic linear pencil
  $L$ such that $\cD_L(1)=S$.  Thus, 
  {\it if $\mathcal T=(\mathcal T(n))$
  is a graded set with $\mathcal T(1)=S$, then $\mathcal T$
  does not have an LMI representation. }

 Now we build a certain type of representation for $S$. 
 Given $\alpha$ a positive real number,
 choose
 $\gamma^4= 1+2\alpha^2$ and
  let
\begin{equation}
 \label{eq:Lalpha0}
  L^\alpha_0 = \begin{pmatrix} 1 & 0 &  y_1 \\
             0 & 1 & y_2 \\
              y_1 & y_2 &
                  1- 2\alpha (y_1+y_2) \end{pmatrix}
\end{equation}
 and
\begin{equation}
 \label{eq:Lalpha12}
  L^\alpha_j =\begin{pmatrix} 1 & \gamma x_j
        \\ \gamma x_j & \alpha + y_j \end{pmatrix}, \quad j=1,2.
\end{equation}
  Note that the $L^\alpha_j$ are not monic, but because $L^\alpha_j(0)\succ 0,$
  they 
  can be normalized to be monic without altering the solution
  sets of $L^\alpha_j(X)\succ 0$.
     That
 \[
   S =\{(x_1,x_2) \in \RR^2: \mbox{there exists }(y_1,y_2) \mbox{ such that }
    L_j^\alpha(x,y)\succ 0, \ \ j=0,1,2\},
 \]
 follows by taking Schur complements
 and a bit of algebra which shows
   $$
   S=\{ (x_1, x_2) : \ 1-2\alpha(y_1+y_2) - y_1^2-y_2^2 >0  , \; \ 
     \alpha + y_j > \sqrt{1+2\alpha^2} \; x_j^2 \}.
   $$
  {\it Consequently,  $S=\pi(\cD_{L^\alpha} (1))$, where 
  $L^\alpha= L_0^\alpha \oplus L_1^\alpha \oplus L_2^\alpha$.}
  
  Note that 
  choosing $\alpha=0$ gives the representation 
  of the TV screen $S$ often found in the
  literature. It is not satisfactory for the
  present purposes, since it 
  can not be normalized to be monic.   
  \qed

 \noindent
{\it Proof  of Proposition \ref{prop:nofreeTarski}
}
  As was seen in the example above, for $\alpha>0$ fixed, $\pi(\cD_{L^\alpha})$ 
  is the projection of an LMI representable set.
  However, it 
  does not have either of the
  forms 
   \eqref{it:fob}  
   or \eqref{it:fobI}
  given in the proposition, as otherwise it would have,
  by  Theorem \ref{cor:main-cor}, 
  an LMI representation,
  thereby contradicting  paragraph one of the example.
\qed

\subsection{Outside Perspectives}
\label{sec:persp}
Here we include two remarks aimed at readers
with interest in either semi-definite programming
 or free real algebraic geometry.

\begin{remark}
\rm
The paradigm problem in semi-definite programming
is to maximize a linear functional over an
SDP representable set.
  A subset $C\subset \mathbb R^g$ is called 
  semi-definite 
  programming representable  or \df{SDP representable},
  if there is a monic linear pencil $L$ in $g+h$ variables 
  such that
$ C=\pi(\cD_L(1))$.
For a general survey and overview of semi-definite programming,
 see Nemirovski's Plenary Lecture   at the 2006 ICM  \cite{Ne06}.

 
  By analogy with the scalar commutative case, 
  a graded subset $\cC=(\cC(n))$
  of $\smatg$   is {\bf freely SDP representable} if
  there is a monic linear pencil $L$ in $g+h$
  variables such that $\cC(n)=\pi(\cD_L(n))$ for each $n$. 
  For example, 
  the graded 
  set $\pi(\cD_{L^\alpha})$ has, by construction,
  a free SDP representation.  
  In this terminology, Corollary \ref{cor:semi-project}
  says  
  \begin{quote}
  {\it if $\cC$ is both
  SDP representable and free semi-algebraic, then
  $\cC$ is LMI representable. }
  \end{quote}
    \qed
 \end{remark}

\begin{remark}
\rm
 As mentioned earlier  
 there are several other natural choices of the notion of
 free semi-algebraic set beyond the one adopted
 earlier.  
 Here we mention one.
  Given a symmetric $p\in\cP^{\dddd}$ with $p(0)\succ 0$ (not
  just invertible), let 
\[
  \mfP_p (n) =\{X\in\smatng : p(X) \succ 0\}.
\]
 Observe that $\cap \mfP_{p_j} = \mfP_p$, where $p=\oplus p_j$. 
 The lemmas and theorems  of this section, appropriately modified,
 hold if  $\mfP_p$ is used as the notion of a \ncbo semi-algebraic set. 
  For example,
  if       
      $pc[\cup_{k=1}^s \mathfrak{P}_{q_k}]$
  is bounded and matrix convex, then it 
   has an LMI representation and is thus
   a \ncbo semi-algebraic set.
  \qed
 \end{remark}

We thank Jiawang Nie for raising the issue
of projected matrix convex sets and we thank Igor Klep
and Victor Vinnikov for fruitful discussions of
the TV screen example(s) above.



\newpage

\centerline{NOT FOR PUBLICATION}

\section{Systems Theory Motivation}
 \label{subsec:motivate}
One of the main advances in systems engineering in the 1990's
was the conversion of a set of problems to LMIs,
since LMIs, up to modest size, can be solved numerically
by semi-definite programs \cite{SIG97}.
A large  class of linear
systems problems are described in terms of
a signal flow diagram $\Sigma$ plus $L^2$
constraints (such as energy dissipation).
Routine methods convert such problems into
a free polynomial inequalities
of the form  $p(X)\succeq 0$ or $p(X)\succ 0$.

Instantiating
specific  systems of linear differential
equations  for the "boxes" in the system flow diagram
amounts to substituting  their coefficient matrices
for variables in the polynomial $p$.
Any property  asserted to  be true must hold
when matrices of any size are substituted into $p$.
 Such problems are referred to as dimension free.
We emphasize, the polynomial $p$ itself is determined
by the signal flow diagram $\Sigma$.

Engineers vigorously seek convexity, since optima are global
and convexity lends itself to numerics.
Indeed,  there are over  a thousand papers trying to convert
linear systems problems to convex ones and the only known technique is
the rather blunt trial and error instrument of trying to guess an LMI.
Since  having an LMI is seemingly  more restrictive than convexity,
there has been the hope, indeed expectation, that some practical
class of convex situations   has been missed.
  The problem solved here
  (though not operating at full engineering generality,
  see \cite{HHLM08})
  is a paradigm for the type of algebra occurring
  in systems problems governed by signal-flow diagrams;
 such physical problems directly present free semi-algebraic sets.
  Theorem \ref{thm:main} gives compelling evidence
  that all such convex situations are associated to some LMI.
Thus we think the implications of our results here are negative
for linear systems engineering; for dimension free problems
there is no convexity beyond LMIs.

It is informative to view this paper in  the context of semi-definite
programming, SDP.
Semidefinite programming, which solves LMIs up to modest size,
was  one of the main developments in optimization over the
  previous two decades.
  Introduced about 15 years
   ago \cite{NN94} it has had a substantial effect in
   many areas of science and mathematics;  e.g.,
  statistics, game theory,  structural design
  and computational real algebraic geometry,
with its largest impact likely being in control systems
and combinatorial  optimization.
   For a general survey, see Nemirovski's Plenary Lecture
   at the 2006 ICM, \cite{Ne06}.
 An introduction of  SDP techniques into a variety of areas
being pursued today was first given (and is well explained in)
 \cite{P00}.
   The numerics of semi-definite programming
   is well developed   and there are numerous packages; e.g.,
   \cite{Sturm99} \cite{GNLC95}  and comparisons \cite{Mi03}
     which apply
   when the constraint  is input as the solution to
    a Linear Matrix Inequality.

  A basic question regarding the range of   applicability of SDP is:
  which sets have an LMI representation?
  Theorem \ref{thm:main} settles, to a reasonable extent, the case where
 the variables are free
 (effectively dimension free matrices).

 For perspective, in the commutative case
  of a basic semi-algebraic subset $\cC$
  of $\RR^{g}$, as we have already mentioned,
  there is a stringent condition, called the
 ``line test'', which, in addition to convexity,
  is  necessary for $\cC$ to have an LMI representation.
  In two dimensions the line test is necessary and sufficient,
  \cite{HV07}. This was seen by Lewis-Parrilo-Ramana \cite{LPR05}
  to settle a 1958 conjecture of Peter Lax on hyperbolic polynomials
  and indeed LMI representations are closely tied to properties
  of hyperbolic polynomials.

In summary,  a (commutative)
bounded basic open semi-algebraic convex set
has an LMI representation, then it must pass
the highly restrictive line test; whereas
a  \ncbbo semi-algebraic set has
 an LMI representation if and only if it is convex.

\newpage

\tableofcontents

\newpage

\printindex


\begin{thebibliography}{99}


\bibitem[Ar69]{Ar69}
  Arveson, William,
 {\it On subalgebras of $C\sp{\ast}$-algebras,}
Bull. Amer. Math. Soc. {\bf 75} (1969),  790--794.

\bibitem[Ar72]{Ar72}
  Arveson, William,
  {\it Subalgebras of $C\sp{\ast} $-algebras II,}
  Acta Math. {\bf 128} (1972), no. 3-4, 271--308.

\bibitem[Ar08]{Ar08}
  Arveson, William,
  {\it The noncommutative Choquet boundary,}
   J. Amer. Math. Soc.  {\bf 21}  (2008),  no. 4, 1065--1084.

\bibitem[BCR98]{BCR98}
 By Bochnak, J.; Coste, M.;
  Roy, M.-F.,
 {\it Real algebraic geometry,}
 Springer, 1998 (ISBN 3540646639, 9783540646631).

\bibitem[BL04]{BL04} Blecher, David; LeMerdy, Christian,
 {\it Operator Algebras and Their Modules
An operator space approach,}
David P. Blecher and Christian Le Merdy,
Clarendon Press,  London Mathematical Society Monographs 30,
 2004 (978-0-19-852659-9).

\bibitem[DHM07]{DHM07}
  Dym, Harry; Helton, William; McCullough, Scott,
  {\it  Irreducible noncommutative defining polynomials
  for convex sets have degree four or less,}
    Indiana Univ. Math. J.  {\bf 56}  (2007),  no. 3, 1189--1231.

\bibitem[EW97]{EW97}
  Effros, Edward G.; Winkler, Soren,
  {\it Matrix convexity: operator analogues of the bipolar
      and Hahn-Banach theorems,}
    J. Funct. Anal. {\bf 144} (1997), no. 1, 117--152.

\bibitem[HV07]{HV07}
   Helton, J. William; Vinnikov, Victor,
    {\it  Linear matrix inequality representation of sets,}
     Comm. Pure Appl. Math.  {\bf 60}  (2007),  no. 5, 654--674.


\bibitem[Ne06]{Ne06}
A.~Nemirovskii,  Advances in convex optimization: conic
programming, {\it Plenary Lecture,  International Congress of
Mathematicians (ICM)}, Madrid, Spain, 2006.

\bibitem[Pa02]{Pa02} Paulsen, Vern I.,
 {\it Completely Bounded Maps and Operator Algebras},
  Cambridge University Press, 2002
  (978-0521816694)

\bibitem[Pi03]{Pi03}
   Pisier, Gilles,
 {\it Introduction to operator space theory,}
   London Mathematical Society Lecture Note Series, 294.
     Cambridge University Press, Cambridge, 2003.

\bibitem[Vo04]{Vo04}
  Voiculescu, Dan,
  {\it Free analysis questions. I. Duality transform
  for the coalgebra of $\partial\sb {X\colon B}$},
  Int. Math. Res. Not. 2004, no. 16, 793--822.

\bibitem[Vo05]{Vo05}
  Voiculescu, Dan,
  {\it Aspects of free probability,} (English summary) XIVth
  International Congress on Mathematical Physics, 145--157,
  World Sci. Publ., Hackensack, NJ, 2005.







\end{thebibliography}

\begin{thebibliography}{99}


\bibitem[GNLC95]{GNLC95}
P. Gahinet, A. Nemirovskii, A.J. Laub, M. Chilali,
     {\it LMI Control Toolbox},
     The Math Works, Inc., USA, 1995.

\bibitem[HHLM08]{HHLM08}
Hay, Damon M.;
Helton, J. William; Lim, Adrian; McCullough,
Scott {Non-commutative partial matrix convexity.}
  Indiana Univ. Math. J. {\bf 57} (2008), no. 6, 2815--2842.

\bibitem[LPR05]{LPR05}
 Lewis, A. S.; Parrilo, P. A.; Ramana, M. V.
{The Lax conjecture is true.}
 Proc. Amer. Math. Soc. 133
(2005), no. 9, 2495--2499 (electronic).

\bibitem[Mi03]{Mi03}
Mittelmann, H. D.,
{\it An independent benchmarking of SDP and SOCP solvers,}
Journal Mathematical Programming
 {\bf 95} (2003), no. 2,  
 407--430.

\bibitem[NN94]{NN94}
A.~Nemirovskii and Y.~Nesterov,  Interior-point polynomial
algorithms in convex programming. SIAM Studies in Applied
Mathematics, 13, Society for Industrial and Applied Mathematics
(SIAM), Philadelphia, PA, 1994. 

\bibitem[P00]{P00} Parrilo, Pablo,
 {\it Structured Semidefinite Programs and Semialgebraic
 Geometry Methods in Robustness and Optimization},
  Cal Tech Thesis, 2000,
  http://thesis.library.caltech.edu/1647/  or
http://www.mit.edu/~parrilo/pubs/index.html\#thesis.

\bibitem[SIG97]{SIG97}
 Skelton, R. E., Iwasaki, T. ,  Grigioridas, K. M., {\it A Unified
 Algebraic Approach to Linear Control Design}, Taylor \& Francis,
 San Francisco, 1997.

\bibitem[St99]{Sturm99}
   J. F. Sturm, {\it
   Using {SeDuMi} 1.02, a {MATLAB} Toolbox for
                  Optimization over Symmetric Cones}, j-OMS,
   1999,   11/12, 1-4, 625--653.

\end{thebibliography}
\end{document}